\newtheoremstyle{case}{}{}{}{}{}{:}{ }{}
\theoremstyle{casep}
\newcommand{\C}{\mathbb{C}}
\newcommand{\Z}{\mathbb{Z}}
\newcommand{\Pj}{\mathbb{P}}
\newcommand{\N}{\mathbb{N}}
\newtheorem{theorem}{Theorem}[section]
\newtheorem{proposition}[theorem]{Proposition}
\newtheorem{lemma}[theorem]{Lemma}
\theoremstyle{definition}
\newtheorem{definition}[theorem]{Definition}
\newtheorem{example}[theorem]{Example}
\newtheorem{remark}[theorem]{Remark}
\newtheorem*{theorem*}{Theorem}
\newcommand{\rank}{\operatorname{rank}}
\begin{document}

\title{On a geometric method for the identifiability of forms.}
\date{}

\author{Andrea Mazzon}


\maketitle
\begin{abstract}
We introduce a new criterion which tests if a given decomposition of a given ternary form $T$ of even degree is unique. The criterion is based on the analysis of the Hilbert function of the projective set of points $Z$ associated to the decomposition, and on the Terracini's Lemma which describes tangent spaces to secant varieties. The criterion works in a range for the length
of the decomposition which is equivalent to the range in which the reshaped Kruskal's criterion (see \cite{sette}) works. Our criterion determines an algorithm for the identifiability of $T$ which is sensibly faster than algorithms based on the reshaped Kruskal's criterion, especially when the set of points $Z$ is not in general position.

\emph{Keywords: Algebraic Geometry \and Tensor Analysis \and Veronese Varieties \and Symmetric tensors}
\end{abstract}

\section{Introduction}
\label{introduction}

The paper is devoted to the following problem: find an efficient method to determine if a given rank decomposition of a homogenous polynomial (form), seen as a symmetric
tensor, is minimal and unique, so that the tensor is identifiable in the sense of \cite{sette}. In other words, we are looking for algorithms, with low computational cost, which determine when a form $ T $ can be written in a unique way as a minimal sum of powers of linear forms (up to scaling). 

The standard method to determine the identifiability of a tensor, introduced by Kruskal in \cite{tredici}, provides an algorithm that verifies if a given decomposition
of a form $T$ is unique and minimal. Kruskal's algorithm can be implemented in a reasonable computational time. The identifiability of a tensor is relevant for 
applications in signal processing, image reconstruction, artificial intelligence, statistical mixture models, etc.
 (see e.g. \cite{quindici}, \cite{sedici}, \cite{diciassette}). Indeed the Kruskal's algorithm is used extensively by researchers in technological fields
(see e.g. \cite{diciotto}).
 
The main weakness of the Kruskal's algorithm is that it can provide an answer for the identifiability of $T$ only if the number of summands in the decomposition is \emph{small}, compared with the range of possible ranks. This means that Kruskal's method applies only in very specific situations.

In order to broaden the range of applicability of Kruskal's type methods, in \cite{sette} Chiantini, Ottaviani and Vannieuwenhoven introduced the \emph{reshaped Kruskal's method}. The reshaped Kruskal's method
applies in a wider range, with respect to the original Kruskal's method, but it requires the computation of some geometric invariants of a decomposition $A$ of $T$ (see Definition \ref{remark:span} below).
The computational cost of finding these invariants of $A$ is still reasonable, as soon as $A$ is sufficiently general. On the other hand,
in special cases, mainly when $A$ is in some \emph{special position}, the computational cost increases oddly.

The aim of this note is to investigate new methods to determine the uniqueness and minimality of a decomposition of $T$,
 which can produce a considerable cut in the computational cost of the procedure, with respect to the reshaped Kruskal's method, 
 even when $A$ is not in general position.

Our new method is based on geometric properties of the decomposition $A$ of $T$, viewed as a finite set of points in a projective space.
In a range for the length of $A$ which is similar to the range where the reshaped Kruskal's criterion applies,
we will show that the analysis of the Hilbert function of $A$ and its \emph{Cayley-Bacharach properties} (see the definitions below) produces
a criterion which determines that either $A$ is unique, or there exists an infinite family of
decompositions of $T$, containing $A$. Next, we show that the existence of the infinite family can be excluded by the analysis
of the Terracini's tangent space to the secant variety to a Veronese variety (a classical geometric object for the study of secant varieties, see \cite{diciannove}). 
To do that, we extend a procedure introduced and discussed in \cite{sette} and \cite{otto}.
Thus, summing up the two procedures, we get a new criterion to determine the identifiability of $T$, which is described in Section 4 below.

The criterion, which is based on arguments of Algebraic Geometry but requires only algorithms of Linear Algebra, 
produces an effective method to test the uniqueness (and thus also the minimality) of a decomposition, whose computational cost can be considerably lower than the cost of the reshaped Kruskal's method.

We analyse in particular the procedure for the case of ternary forms $T$ (i.e. symmetric tensors of type $3\times \dots \times 3$), for which
we know a decomposition $A$, corresponding to a set of points in $\Pj^2$. This is the first case
in which the new methods apply and reduces considerably the computational cost of the procedure, see the final Remark \ref{compucost}.
Of course, a similar analysis applies in more generality, with adjustments for any specific case.

We observe that the new method essentially suggests that, in many cases, one can substitute the computation of the higher
Kruskal's rank with the computation of the dimension of the Terracini's space, which turns out to be much cheaper in terms of computational cost.
From this point of view, the paper has been inspired by, and it is a sort of continuation of 
\cite{due}, \cite{ventuno}, section 6 of \cite{sette}, and \cite{otto}.

\section{Notation and preliminaries}
\label{Notation and preliminaries}
In this section we recall some useful results that we will use for the investigation of the identifiability of forms (i.e. for the identifiability of symmetric tensors).
\smallskip

We work over the complex field.

Let $T$ be a form of degree $d$ in $3$ variables, i.e. $T\in \Pj(Sym^d(\C^{3}))$. A \emph{Waring decomposition} of $T$ is an expression
$$T= a_1T_1+\dots + a_rT_r$$
where each $T_i$ is the $d$-th power of a linear form $T_i=L_i^d$. In principle, since we are working over an algebraically closed field, 
we could get rid of the coefficients $a_i$'s. We will maintain them because our starting point will be the set of linear forms $L_i$, and the various 
forms $T$ that can be decomposed by the fixed $L_i$'s will thus be obtained by changing the choice of the coefficients.

Since we will study the geometry of the decomposition, we take the projective point of view. So, by abuse of notation, we will identify each linear form $L_i$ with a point
$P_i$ in a projective plane $\Pj^2=\Pj(\C^3)$. The power $L_i^d$ corresponds to the image of $P_i$ via the Veronese map $\nu_d$ of degree $d$ which sends
$\Pj^2$ to the projective space $\Pj(Sym^d(\C^{3}))$ of forms of degree $d$. We will also identify $T$ with a point of $\Pj(Sym^d(\C^{3}))$, and, by abuse of notation, we will denote
by $T$ both the form in $Sym^d(\C^{3})$ and the point in $\Pj^N$ which represents $T$, where $N = \binom{3+d}d - 1 $. 

So, the Waring decomposition above identifies a finite subset $A=\{P_1,\dots,P_r\}$ of $\Pj^2$.
\smallskip

For any finite subset $Z$ of the projective plane, we denote by $\ell(Z)$ the cardinality of $Z$.

With this notation, we give the following \emph{geometric} definition of decomposition, which is nothing more than a rephrasement of a Waring decomposition
in geometric terms.

\begin{definition}
Let $A \subset \Pj^m $ be a finite set, $A=\{P_1,\dots, P_r\}$. $A$ is a \emph{decomposition} of $T\in \Pj(Sym^d(\C^{m+1}))$ if $ T $ belongs to $ \langle v_d(A) \rangle$, 
the linear space spanned by the points of $v_d(A)$. In other words, for a choice of scalars $a_i$'s,
$$T = a_1v_d(P_1)+\dots + a_rv_d(P_r). $$
The number $\ell(A)=r$ is the \emph{length} of the decomposition.

The decomposition $A$ is \emph{non-redundant} if $T$ is not contained in the span of $v_d(A')$, for any proper subset
$A'\subset A$. In particular, if $v_d(A)$ is linearly dependent, then $A$ cannot be non-redundant.

We say that $A$ is \emph{minimal} if no decompositions of $T$ have length smaller than $r$.
We say that $T$ is \emph{identifiable} if it has a unique minimal decomposition. It is almost obvious that if $A$ 
is the unique decomposition of $T$ of length $r$, then $A$ is minimal and $T$ is identifiable.
\end{definition}

In order to introduce the reshaped Kruskal's criterion, with respect to which we will compare our algorithm, 
we need the notion of \emph{Kruskal's rank}. This notion, which in special cases is different from the usual notion of rank, 
has been introduced by Kruskal for matrices. We rephrase it in the geometric language, for sets of points in a projective space.

\begin{definition}\label{kr}
For a finite set $Z \subset \Pj^m$, the \emph{Kruskal's rank} $k(Z)$ of $Z$ is the maximum $k$ for which any subset
of cardinality $\leq k$ of $Z$ is linearly independent.

The Kruskal's rank $k(Z)$ is bounded above by $m+1$ and $\ell(Z)$. 
\end{definition}

\begin{remark}\label{ksub} It is a consequence of the irreducibility of projective spaces that for a \emph{sufficiently general} subset 
$Z\subset\Pj^m$ all the Kruskal's ranks $k_d(Z)$ are maximal and coincides with the rank of a matrix whose rows are
projective coordinates for the points of $Z$.

The Kruskal's rank $k(Z)$ attains the maximum $\min\{m+1, \ell(Z)\}$ when all the subsets of $Z$ of cardinality at most $m+1$ are linearly independent. In this case, for any subset $Z'\subset Z$ one has
$k(Z')=\min\{m+1, \ell(Z')\}$.

Notice also that for any subset $Z'\subset Z$, we have $k(Z')\geq \min\{\ell(Z'), k(Z)\}$.
\end{remark}

We can use the Veronese maps to define the higher Kruskal's ranks of a finite set $Z$.

\begin{definition}\label{remark:span}
For a finite set $Z \subset \Pj^m$, the \emph{$d$-th Kruskal's rank} $k_d(Z)$ of $A$ is the Kruskal's rank of the image of $Z$ via the Veronese map $v_d$.
Thus the $d$-th Kruskal's rank $k_d(Z)$ is bounded by $\min\{\ell(Z), \binom{m+d}d\}$.

The Kruskal's rank $k(Z)$ coincides with the first Kruskal's rank $k_1(Z)$.

We notice that finding $k_d(A)$ is equivalent to finding the dimension of the subspace spanned by $v_d(Z')$ for the subsets $Z'$ of $Z$.
\end{definition}

\vspace{5mm}

For a finite set of points $Z$ let $I_Z$ be the homogeneous ideal associated to $Z$. We will denote with $I_Z(d)$ the homogeneous part of degree $d$ of $I_Z$. 
For all $d$, $I_Z(d)$ is a finite-dimensional vector space over the field $\C$.

We recall that the Hilbert function $h_Z$ of a set of points $Z \subset \mathbb{P}^{m}$ is defined as follows:
	$$h_Z(d)= \dim(Sym^d(\mathbb{C}^{m+1}))- \dim (I_Z(d))= \binom{m+d}{d} - \dim(I_Z(d)).$$

We recall also that the first difference of Hilbert function $Dh_Z$ of $Z$ is defined as:$$Dh_Z(j)=h_Z(j)-h_Z(j-1), \ \ j\in\mathbb{Z}.$$

Equivalently, we give an alternative definition of the Hilbert function which can be more natural for people working in applicative fields. First of all, we define the evaluation map.

\begin{remark}
Let $Y\subset \C^{m+1} $ be an ordered, finite set of cardinality $\ell $ of vectors. Fix an integer $ d \in \N $. 

The \emph{evaluation map of degree $d$ on $Y$} is the linear map
$$ ev_{Y}(d): Sym^d(\C^{m+1}) \to \C^\ell $$ 
which sends $ F \in Sym^d(\C^{m+1}) $ to the evaluation of $ F$ at the vectors of $Y$. 

Let $Z \subset \Pj^m $ be a finite set. Choose a set of homogeneous coordinates for the points of $Z$.
We get an ordered set of vectors $Y\subset \C^{m+1} $, for which the evaluation map $ev_{Y}(d)$ is defined for every $d$.

If we change the choice of the homogeneous coordinates for the points of the fixed set $Z$, the evaluation map
changes, but all the evaluation maps have the same rank. 
So, we can define the \emph{Hilbert function} of $Z$ as the map:
$$ h_Z : \Z \to \N \qquad h_Z(d) = \rank(ev_{Y}(d)) .$$ 
\end{remark}

We will use several times the well known fact that the first difference of Hilbert function, after a certain point, is not increasing.

\begin{proposition}\label{nonincr}
Let $ Z \subset \Pj^m $ be a finite set of points. Assume that for some $j>0$ we have $Dh_Z(j) \leq j$. Then:
$$ Dh_Z(j) \geq Dh_Z(j+1), $$
so that $ Dh_Z(j) \geq Dh_Z(i) $ for all $i\geq j$.
In particular, if for some $j>0$,we have $Dh_Z(j)=0 $, then $Dh_Z(i)=0$ for all $i\geq j$.
\end{proposition}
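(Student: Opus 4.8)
The plan is to prove Proposition~\ref{nonincr} by reducing to a generic hyperplane section, which is the standard tool for controlling how the first difference of the Hilbert function behaves. More precisely, I would first recall the classical fact that if $H\subset\Pj^m$ is a general hyperplane and $Z'=Z\cap H$ is regarded as a finite set of points in $H\cong\Pj^{m-1}$, then the Hilbert function of $Z'$ equals the first difference of the Hilbert function of $Z$, i.e. $h_{Z'}(i)=Dh_Z(i)$ for all $i$. This follows from the exact sequence $0\to I_Z(i-1)\xrightarrow{\ \cdot L\ } I_Z(i)\to I_{Z'}(i)\to 0$ for a general linear form $L$ defining $H$, the point being that multiplication by $L$ is injective on the coordinate ring of $Z$ since no point of $Z$ lies on $H$ and the general hyperplane section does not drop the expected values. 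Iterating this reduction, after $m-1$ general hyperplane sections one is left with a set of points $W$ on a projective line, whose difference function $Dh_W$ we understand completely.

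Next I would carry out the key numerical step, which is essentially the statement of Macaulay's bound / the case of points on a line. After reducing modulo a general linear form we may assume $Z$ sits on $\Pj^1$, where $h_Z$ is the Hilbert function of a $0$-dimensional subscheme of $\Pj^1$: it strictly increases by $1$ until it reaches the value $\ell(Z)$, and then stays constant. Hence $Dh_Z$ takes the form $1,1,\dots,1,0,0,\dots$ — in particular once it drops to $0$ it stays $0$, and it is non-increasing outright. Pulling this back through the hyperplane sections: the hypothesis $Dh_Z(j)\le j$ is exactly what guarantees that at the previous stage the relevant difference function has already entered its non-increasing regime, so that the inequality $Dh_Z(j)\ge Dh_Z(j+1)$ propagates. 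The cleanest way to organize this is to invoke Macaulay's growth theorem for the quotient ring $S/I_Z$: writing $Dh_Z(j)$ in its $j$-th Macaulay representation, the condition $Dh_Z(j)\le j$ forces the representation to be ``short'', and then Macaulay's bound gives $Dh_Z(j+1)\le (Dh_Z(j))^{\langle j\rangle}=Dh_Z(j)$.

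I expect the main obstacle to be handling the boundary/degenerate behaviour carefully: the reduction to a generic hyperplane section is only valid for a general hyperplane, and one must be sure that the identity $h_{Z'}(i)=Dh_Z(i)$ holds for \emph{all} $i$ simultaneously with a single general choice of $H$ (this is fine, since only finitely many conditions are involved, but it should be stated). A second delicate point is the base of the induction and the precise bookkeeping of which difference function is non-increasing at which stage; the hypothesis $Dh_Z(j)\le j$ must be translated correctly through each section. Once those points are settled, the final assertions are immediate: $Dh_Z(j)\ge Dh_Z(j+1)$ iterates to give $Dh_Z(j)\ge Dh_Z(i)$ for all $i\ge j$, and if $Dh_Z(j)=0$ for some $j>0$ then $0\le Dh_Z(i)\le Dh_Z(j)=0$ for all $i\ge j$, so $Dh_Z(i)=0$ there. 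An alternative, entirely self-contained route would be to prove the points-on-a-line case by hand and then do the hyperplane-section induction explicitly, avoiding any appeal to Macaulay's theorem; I would present whichever is shorter given the conventions already fixed in the paper.
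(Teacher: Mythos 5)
Your core numerical idea is right, but the reduction you build it on is false as stated, and this is a genuine gap. For a \emph{finite} set of points $Z\subset\Pj^m$, a general hyperplane $H$ misses every point of $Z$, so $Z'=Z\cap H=\emptyset$ and $h_{Z'}\equiv 0$, which is certainly not $Dh_Z$; the identity ``$h_{Z\cap H}(i)=Dh_Z(i)$'' is a fact about hyperplane sections of \emph{positive-dimensional} (arithmetically Cohen--Macaulay) schemes, not of point sets. What is true, and what you actually need, is that $Dh_Z$ is the Hilbert function of the \emph{Artinian reduction} $S/(I_Z+(L))$ for a general linear form $L$: the sequence $0\to (S/I_Z)(-1)\xrightarrow{\ \cdot L\ } S/I_Z\to S/(I_Z+(L))\to 0$ is exact because $S/I_Z$ is a one-dimensional Cohen--Macaulay ring and a general $L$ avoids its finitely many associated primes. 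This quotient is a graded Artinian algebra, not the coordinate ring of a set of points in $\Pj^{m-1}$, so your iteration ``after $m-1$ sections we are left with points on $\Pj^1$'' does not make sense, and the ensuing claim that one may assume $Dh_Z$ has the shape $1,1,\dots,1,0,\dots$ is false in general (general points in $\Pj^2$ have $Dh_Z=1,2,3,\dots$; that shape occurs only when $Dh_Z(1)\le 1$). Relatedly, Macaulay's growth theorem must be applied to the Artinian quotient $S/(I_Z+(L))$, whose Hilbert function is $Dh_Z$, not ``to the quotient ring $S/I_Z$'' as you write: applied to $S/I_Z$ it bounds $h_Z(j+1)$ in terms of $h_Z(j)$ and says nothing directly about the first difference.

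Once the reduction is repaired, your second route goes through verbatim and is the standard argument (it is essentially what Section 3 of the reference cited by the paper, Bigatti--Geremita--Migliore, does; the paper itself gives no proof beyond that citation). Namely: $Dh_Z$ is the Hilbert function of a standard graded Artinian algebra; if $c=Dh_Z(j)\le j$ with $c\ge 1$, its $j$-th Macaulay expansion is $\binom{j}{j}+\binom{j-1}{j-1}+\cdots+\binom{j-c+1}{j-c+1}$, whence $c^{\langle j\rangle}=c$ and Macaulay's bound gives $Dh_Z(j+1)\le Dh_Z(j)$ (the case $c=0$ being trivial since $Dh_Z\ge 0$). The iteration is then legitimate because $Dh_Z(j+1)\le Dh_Z(j)\le j<j+1$, so the hypothesis propagates, and the final assertion about $Dh_Z(j)=0$ follows as you say. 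I would drop the ``self-contained points-on-a-line induction'' alternative entirely, since it rests on the same invalid hyperplane-section step.
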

\begin{proof} See Section 3 of \cite{tre}.

\end{proof}

The following lemma collects some well known facts about the Hilbert function and its first difference. 

The proofs of this properties can be found in the literature (see e.g. the book of Iarrobino and Kanev \cite{ventitre}) but they are quite sparse. All the proofs are collected in Lemma 2.16 and Proposition 2.17 of \cite{uno}.

\begin{lemma} Let $ Z \subset \Pj^m$ be a finite set of points and set $\ell= \ell(Z)$. Then we have:
\begin{itemize}
\item[1)] $h_Z(d) \leq \ell$ for all $d$;
\item[2)] $Dh_Z(d)=0$ for $d<0$;
\item[3)] $h_Z(0) = Dh_Z(0) = 1$; 
\item[4)] $Dh_Z(d) \geq 0 $ for all $d$;
\item[5)] $h_Z(d) = \ell(Z)$ for all $d \geq \ell(Z)- 1$;
\item[6)] $h_Z(i) = \sum_{0\leq d\leq i} Dh_Z(d)$;
\item[7)] $Dh_Z(d) = 0$ for $d >> 0$ and $\sum_d Dh_Z(d) = \ell(Z)$;
\item[8)] if $h_Z(d) = \ell(Z), $then $Dh_Z(d + 1) = 0$;
\item[9)] if $Z' \subset Z$, then, we have $h_{Z'} (d) \leq h_Z(d)$ and $Dh_Z' (d) \leq Dh_Z(d)$ for every $d \in \Z$.
\end{itemize}
\label{lemma:mix}
\end{lemma}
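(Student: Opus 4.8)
The plan is to deduce all nine items from two inputs: the identification of $h_Z(d)$ with the rank of the evaluation map $ev_Y(d)\colon Sym^d(\C^{m+1})\to\C^\ell$, equivalently with $\binom{m+d}{d}-\dim I_Z(d)$; and the effect of multiplying by a general linear form on the graded pieces of $R/I_Z$, where $R=\C[x_0,\dots,x_m]$. Four of the items are essentially bookkeeping. Item (1) holds because $\C^\ell$ has dimension $\ell$, so $\rank(ev_Y(d))\le\ell$. Items (2) and (3) use the conventions $h_Z(d)=0$ for $d<0$ and $\dim Sym^0(\C^{m+1})=1$ (with $Z\neq\emptyset$): these give $Dh_Z(d)=0-0=0$ for $d<0$ and $h_Z(0)=Dh_Z(0)=1$. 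Item (6) is the telescoping identity $\sum_{0\le d\le i}Dh_Z(d)=h_Z(i)-h_Z(-1)=h_Z(i)$. Item (8) combines (1) and (4): if $h_Z(d)=\ell$ then $\ell=h_Z(d)\le h_Z(d+1)\le\ell$, so $Dh_Z(d+1)=0$.

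The substantive tool, used for (4), (5) and (7), is a linear form $L$ not vanishing at any point of $Z$; such $L$ exists because the field is infinite. If $F\in Sym^{d-1}(\C^{m+1})$ has $LF\in I_Z(d)$, then $LF$ vanishes on $Z$ while $L$ does not, so $F\in I_Z(d-1)$; thus multiplication by $L$ induces an injection $(R/I_Z)_{d-1}\hookrightarrow(R/I_Z)_d$, giving $h_Z(d-1)\le h_Z(d)$, which is (4). For (5) I would produce, for each $P_i\in Z$, a degree-$(\ell-1)$ form $F_i=\prod_{j\neq i}L_{ij}$, where $L_{ij}$ is a linear form vanishing at $P_j$ but not at $P_i$ (available since $P_i\neq P_j$). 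Then $ev_Y(\ell-1)(F_i)$ is a nonzero multiple of the $i$-th standard basis vector of $\C^\ell$, so $ev_Y(\ell-1)$ is surjective and $h_Z(\ell-1)=\ell$; with (4) and (1) this forces $h_Z(d)=\ell$ for all $d\ge\ell-1$, which is (5). Then (7) follows at once: $Dh_Z(d)=0$ for $d\ge\ell$, and $\sum_d Dh_Z(d)=\lim_i h_Z(i)=\ell$ by (6).

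For (9), the inequality $h_{Z'}(d)\le h_Z(d)$ is immediate from $I_Z(d)\subseteq I_{Z'}(d)$ (a form vanishing on $Z$ vanishes on the subset $Z'$). The inequality $Dh_{Z'}(d)\le Dh_Z(d)$ is the step I expect to be the main obstacle, since it compares first differences rather than values in a single degree. I would again pick $L$ not vanishing on $Z$, hence not on $Z'$; by the argument above, multiplication by $L$ is injective on the graded pieces of both $R/I_Z$ and $R/I_{Z'}$, with cokernels of dimensions $Dh_Z(d)$ and $Dh_{Z'}(d)$ in degree $d$. The surjection $R/I_Z\twoheadrightarrow R/I_{Z'}$ commutes with multiplication by $L$, so the snake lemma (or a direct chase) shows the induced map on degree-$d$ cokernels is surjective, whence $Dh_{Z'}(d)\le Dh_Z(d)$. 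The only delicate point is that a single $L$ must work for both rings at once, which is why one chooses $L$ whose zero locus avoids the finitely many points of $Z$. As noted before the statement, all of these arguments are classical; the contribution of the write-up is only to collect them in one place.
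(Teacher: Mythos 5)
Your proof is correct. Note, though, that the paper does not actually prove this lemma: it simply points to the literature (Iarrobino--Kanev, and Lemma 2.16 / Proposition 2.17 of the cited reference), so there is no internal argument to compare step by step. What you wrote is a self-contained version of exactly the classical toolkit those references use: a linear form $L$ not vanishing on $Z$ (available over the infinite field $\C$) makes multiplication by $L$ injective on the graded pieces of $R/I_Z$, which gives monotonicity (4); the separator forms $F_i=\prod_{j\neq i}L_{ij}$ of degree $\ell-1$ give surjectivity of the evaluation map and hence (5), with (7) and (8) following formally; and for the only genuinely delicate item, the inequality $Dh_{Z'}(d)\leq Dh_Z(d)$ in (9), your observation that one single $L$ works simultaneously for $Z$ and $Z'\subset Z$, so that the surjection $R/I_Z\twoheadrightarrow R/I_{Z'}$ induces a surjection on the degree-$d$ cokernels of multiplication by $L$, is the standard and correct argument. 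The bookkeeping items (1), (2), (3), (6) are fine as stated, with the implicit (and harmless) convention $Z\neq\emptyset$ for (3). So the trade-off is simply that the paper buys brevity by citation, while your write-up makes the lemma self-contained at the cost of a page of classical commutative algebra; there is no gap in what you propose.
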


 The next proposition gives us an useful information about the first difference Hilbert function of the union of two different decompositions of a form $T$.

\begin{proposition} Given two different decompositions $A$, $B$ of a form $T$ of degree $ d $ in $ m+1 $ variables, then
$h_{A\cup B} (d) < \ell(A \cup B)$, so $Dh_{A \cup B} (d +1) > 0$.
\label{d+1}
\end{proposition}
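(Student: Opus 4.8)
The plan is to exploit the hypothesis that $A$ and $B$ are two genuinely distinct non-redundant decompositions of the same $T$, and to derive a linear dependence among the points of $v_d(A\cup B)$ that forces the Hilbert function of $A\cup B$ in degree $d$ to drop below its cardinality. First I would observe that, since $T\in\langle v_d(A)\rangle$ and $T\in\langle v_d(B)\rangle$, we can write $T=\sum_{P\in A}a_P v_d(P)=\sum_{Q\in B}b_Q v_d(Q)$ for suitable scalars. Subtracting these two expressions yields a nontrivial linear relation $\sum_{P\in A}a_P v_d(P)-\sum_{Q\in B}b_Q v_d(Q)=0$ among the points of $v_d(A\cup B)$; nontriviality uses that $A\neq B$ together with the fact that both decompositions may be assumed non-redundant, so at least one coefficient on a point of $A\setminus B$ (or $B\setminus A$), or a genuinely different coefficient on a common point, survives. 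Hence $v_d(A\cup B)$ is a linearly dependent set.

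Next I would translate this linear dependence into a statement about the Hilbert function. Recall that $h_{A\cup B}(d)=\rank(ev_Y(d))$ where $Y$ is a choice of homogeneous coordinates for the points of $A\cup B$; equivalently, $h_{A\cup B}(d)$ is the dimension of the span $\langle v_d(A\cup B)\rangle$. Since we have just exhibited a nontrivial linear relation among the $\ell(A\cup B)$ points $v_d(P)$, this span has dimension strictly less than $\ell(A\cup B)$, i.e.\ $h_{A\cup B}(d)<\ell(A\cup B)$. This is precisely the first assertion.

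For the second assertion, I would invoke part 5) of Lemma \ref{lemma:mix}: $h_{A\cup B}(e)=\ell(A\cup B)$ for all $e\geq \ell(A\cup B)-1$, so the Hilbert function, which is nondecreasing by parts 4) and 6), must at some later degree reach the value $\ell(A\cup B)$. Since it is strictly below $\ell(A\cup B)$ in degree $d$, there is a degree $e\geq d$ at which $Dh_{A\cup B}(e+1)>0$; but more directly, because $h_{A\cup B}(d)<\ell(A\cup B)$ and by parts 1) and 4) the Hilbert function is bounded by $\ell(A\cup B)$ and never decreases, we get $h_{A\cup B}(d+1)\geq h_{A\cup B}(d)$, and the claim $Dh_{A\cup B}(d+1)>0$ follows once we know $h_{A\cup B}(d+1)>h_{A\cup B}(d)$. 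The cleanest route is the contrapositive of part 8): if $Dh_{A\cup B}(d+1)=0$ then, by Proposition \ref{nonincr} (since $Dh$ is eventually $0$ and nonincreasing past the first place where it is $\leq j$), $Dh_{A\cup B}$ vanishes for all larger degrees, forcing $h_{A\cup B}(d)=\ell(A\cup B)$ by part 7), a contradiction. Hence $Dh_{A\cup B}(d+1)>0$.

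The only delicate point—and the step I would write most carefully—is the nontriviality of the subtracted relation: one must be sure that cancelling common points of $A$ and $B$ cannot collapse the relation to $0=0$. This is where the hypothesis that the two decompositions are \emph{different} is used in an essential way, and, if needed, one reduces first to non-redundant decompositions (discarding superfluous points changes neither $T$ nor the conclusion for the smaller union, and the Hilbert-function inequality for a subset implies it for $A\cup B$ via part 9) of Lemma \ref{lemma:mix} only in one direction, so some care is warranted); the honest statement is simply that two distinct decompositions give two distinct coefficient vectors on $A\cup B$, whose difference is the required nonzero relation.
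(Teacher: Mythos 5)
The paper does not actually prove this proposition: it just cites Lemma 1 of \cite{BallBern12a}. Your argument is, in substance, the standard proof of that lemma, and both halves are sound: the difference of the two coefficient vectors is a linear relation among the points of $v_d(A\cup B)$; since $h_{A\cup B}(d)$ is the rank of the evaluation map, i.e. the vector-space dimension of the span of $v_d(A\cup B)$, a nontrivial relation forces $h_{A\cup B}(d)<\ell(A\cup B)$; and your contrapositive use of Proposition \ref{nonincr} together with items 6) and 7) of Lemma \ref{lemma:mix} correctly converts this into $Dh_{A\cup B}(d+1)>0$. So you supply a self-contained linear-algebra proof where the paper outsources it, which is a gain in readability.

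The one genuine issue is exactly the point you flagged, and your closing sentence resolves it incorrectly. With the paper's definition of decomposition (any finite set whose Veronese image spans $T$, coefficients allowed to vanish), two distinct decompositions need \emph{not} give distinct coefficient vectors on $A\cup B$: take $A=\{P_1,P_2\}$ with $T=v_d(P_1)+v_d(P_2)$ and $B=\{P_1,P_2,P_3\}$ with $P_3$ generic. Then the only coefficient vector for $B$ has a zero in the $P_3$ slot and agrees with that of $A$, the set $v_d(A\cup B)$ is linearly independent, and $h_{A\cup B}(d)=\ell(A\cup B)$ --- so the proposition is in fact false as literally stated. The missing hypothesis is non-redundancy of both $A$ and $B$ (present in the cited lemma and in every application in this paper); with it, some point of the symmetric difference carries a nonzero coefficient and your relation is nontrivial at once. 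Your fallback reduction to non-redundant cores does propagate both conclusions when the cores are distinct (adding one point raises $h$ by at most one, and item 9) handles $Dh$), but it cannot rescue the unrestricted statement, because in the example above the two cores coincide. So: add the non-redundancy hypothesis explicitly, then your proof is complete as written.
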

\begin{proof} See Lemma 1 of \cite{BallBern12a}.

\end{proof}

The shape of the first difference Hilbert function $ Dh_Z $ gives us some information on how the points of $ Z $ are located in the plane. In particular, we cite the following Theorem of Bigatti Geremita and Migliore. 

\begin{theorem} \label{BGM} Let $ Z \subset \mathbb{P}^m$ be a finite set. Assume also that for some $ s \leq j$, $Dh_Z(j) = Dh_Z(j +1) = s $. Then there exists a reduced curve $C$ of degree $s$ such that, setting $Z'= Z \cap C$ and $Z'' = Z \setminus Z'$:
	\begin{itemize}
		\item for $i \geq j-1, \ h_{Z'}(i)=h_Z(i)$
		\item for $i \leq j, \ h_{Z'}(i)=h_C(i)$
		\smallskip
		\item $Dh_{Z'} = \bigg \{
		\begin{array}{rl}
		Dh_C(i) & for \ i \leq j+1 \\
		Dh_Z(i) & for \ i \geq j \\
		\end{array}$
		
	\end{itemize}
	In particular, $Dh_{Z'} (i) = s$ for $s \leq i \leq j + 1$.

\end{theorem}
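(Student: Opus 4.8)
The idea is to read the hypothesis as the equality case of Macaulay's growth estimate, extract the curve $C$ from it via Gotzmann's persistence theorem, and then split the Hilbert function of $Z$ along $C$. First I would set aside the degenerate case $s=0$: then Proposition~\ref{nonincr} gives $Dh_Z(i)=0$ for all $i\ge j$, and one may take $C=\emptyset$, $Z'=\emptyset$, $Z''=Z$. So assume $s\ge1$. Since $Z$ is reduced the ring $R/I_Z$, with $R=\C[x_0,\dots,x_m]$, has positive depth, so for a general linear form $\ell$ multiplication by $\ell$ is injective on $R/I_Z$ and $Dh_Z$ is the Hilbert function of the Artinian algebra $\bar A=R/(I_Z+(\ell))$. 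The hypothesis then reads $\dim\bar A_j=\dim\bar A_{j+1}=s$ with $s\le j$, and since the $j$-th Macaulay expansion of an integer $s\le j$ involves only the binomials $\binom ii=1$, one has $s^{\langle j\rangle}=s$. Hence Macaulay's bound $\dim\bar A_{j+1}\le(\dim\bar A_j)^{\langle j\rangle}$ is met with equality: we are in the extremal (``maximal growth'') situation, and by the standard comparison of Macaulay growth with that of a general linear section the same holds for $R/I_Z$.

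Next I would produce the curve. Set $J=\langle(I_Z)_{\le j}\rangle\subseteq I_Z$. Because $\mathrm{HF}_{R/J}$ coincides with $h_Z$ in degrees $\le j$ while $\mathrm{HF}_{R/J}(j+1)$ is squeezed between $h_Z(j+1)$ and the Macaulay bound $h_Z(j)^{\langle j\rangle}$, which coincide by the maximal growth established above, it equals $h_Z(j+1)$; so $R/J$ again has maximal growth from degree $j$ to $j+1$. Since $J$ is generated in degrees $\le j$, Gotzmann's persistence theorem makes this maximal growth persist: $\mathrm{HF}_{R/J}(i)=s\,i+c$ for all $i\ge j$ and some constant $c$, so $V(J)\subseteq\Pj^m$ is one-dimensional of degree $s$ and $Z=V(I_Z)\subseteq V(J)$. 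Let $C$ be the one-dimensional part of $V(J)$ with its reduced structure and put $Z'=Z\cap C$, $Z''=Z\setminus Z'$; a short argument (using that $Z$ is reduced and $Dh_Z(j)=s$ exactly, together with Proposition~\ref{nonincr} and Lemma~\ref{lemma:mix}(9)) shows $\deg C=s$. In the plane case $m=2$, the only one needed here, this is the classical analysis of Davis and of Bigatti--Geremita--Migliore: the Artinian reduction lives in $\C[x,y]$, where maximal growth of a space of forms of degree $j$ is equivalent to its being $G\cdot\C[x,y]_{j-s}$ for a form $G$ of degree $s$, which lifts to a form $F$ cutting out $C$.

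Finally I would split the Hilbert functions. From the residuation exact sequence $0\to(R/I_{Z''})(-s)\xrightarrow{\,\cdot F\,}R/I_Z\to R/(I_Z+(F))\to0$ in $\Pj^2$ (with an analogous linkage sequence in general), and using that the curve, being built from $(I_Z)_{\le j}$, makes $I_Z+(F)$ saturated in the degrees $\le j$, one gets the splitting of first differences $Dh_Z(i)=Dh_{Z'}(i)+Dh_{Z''}(i-s)$. Feeding in the Hilbert function of a reduced curve of degree $s$ — in $\Pj^2$ one has $I_C=(F)$, so $Dh_C(i)=\min\{i+1,s\}$, which is $s$ for $i\ge s-1$; in general a general hyperplane section of $C$ is a reduced set of $s$ points, so $Dh_C(i)=s$ for $i$ large by Lemma~\ref{lemma:mix}(5) — together with Proposition~\ref{nonincr} applied to $Z''$ to pin down the degrees in which $Dh_{Z''}(i-s)$ vanishes, one reads off $h_{Z'}=h_C$ in low degrees, the equalities $h_{Z'}=h_Z$ and $Dh_{Z'}=Dh_Z$ in high degrees, and in particular $Dh_{Z'}(i)=s$ for $s\le i\le j+1$.

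The main obstacle is the second step: converting the purely numerical maximal-growth hypothesis into the geometric statement that a reduced curve of degree exactly $s$ lies in the ambient scheme of $Z$ and carries the growth. This is exactly the force of Gotzmann's persistence theorem (and, in $\Pj^2$, of Davis's lemma on ideals of binary forms), together with the care needed to replace the saturated ideal $I_Z$ by its truncation $J$ and to certify that $C$ is reduced of the right degree. Once the curve is available, the Hilbert-function arithmetic is routine, the only delicate point being the saturation of $I_Z+(F)$ in degrees $\le j$, which is precisely what makes the two halves of each stated formula agree at the breakpoint $i=j$.
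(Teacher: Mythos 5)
The paper does not actually prove this statement: it is quoted from the literature, and the ``proof'' is a citation to Theorem 3.6 of \cite{tre} (Bigatti--Geramita--Migliore). Your proposal is therefore a reconstruction of that cited result, and while it follows the right general strategy (Macaulay growth, Gotzmann persistence, residuation), it has a genuine gap at the pivotal step. You pass from maximal growth of the \emph{first difference} $Dh_Z$ (the Artinian reduction), which is what the hypothesis $Dh_Z(j)=Dh_Z(j+1)=s\le j$ gives via $s^{\langle j\rangle}=s$, to the claim that $h_Z(j+1)$ equals the Macaulay bound $h_Z(j)^{\langle j\rangle}$, so that the truncated ideal $J=\langle (I_Z)_{\le j}\rangle$ has maximal growth and Gotzmann persistence applies to it. This implication is false. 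Take $Z\subset\Pj^2$ consisting of $6$ points on a line $L$ and $3$ general points off $L$: then $Dh_Z=(1,2,3,1,1,1,0,\dots)$, so with $j=3$, $s=1$ the hypotheses hold; but $h_Z(3)=7$, $7^{\langle 3\rangle}=\binom{5}{4}+\binom{4}{3}=9$, while $h_Z(4)=8$. Moreover here $J_4=R_1\cdot (I_Z)_3$ already has dimension $7=\dim (I_Z)_4$, so $\dim (R/J)_4=8<9$: the algebra $R/J$ simply does not have maximal Macaulay growth from degree $j$ to $j+1$, and Gotzmann persistence cannot be invoked for $J$ as you describe, even though the theorem's conclusion (with $C=L$) is true. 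The actual arguments of Davis (in $\Pj^2$) and of \cite{tre} work with the maximal growth of the first difference itself — via the general hyperplane section and a delicate lifting statement (in the plane: maximal growth of $Dh_Z$ forces $(I_Z)_j$ and $(I_Z)_{j+1}$ to have a common divisor of degree $s$) — and this lifting is precisely the content your squeeze was meant to replace.

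A secondary weakness is that the reducedness of $C$ and the equality $\deg C=s$, which are part of the statement and are genuinely nontrivial (they use that $Z$ itself is reduced), are deferred to ``a short argument,'' and the splitting $Dh_Z(i)=Dh_{Z'}(i)+Dh_{Z''}(i-s)$ together with the behaviour at the breakpoint $i=j$ is only asserted. Since these points, together with the lifting step above, constitute essentially all of the difficulty of Theorem 3.6 of \cite{tre}, the proposal as written does not establish the statement; for the purposes of this paper the correct move is simply the citation, or else a faithful reproduction of the Davis/Bigatti--Geramita--Migliore argument rather than the direct Gotzmann shortcut.
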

\begin{proof} See Theorem 3.6 of \cite{tre}.

\end{proof}

Another property that we will use is the \emph{Cayley-Bacharach} property.

\begin{definition}\label{CB}
A finite set $Z\subset \Pj^m$ satisfies the \emph{Cayley-Bacharach property in degree $d$}, 
abbreviated as $CB(d)$, if for any $P \in Z$ every form of degree $d$ vanishing at $ Z\setminus\{ P\}$
also vanishes at $P$.
\end{definition}

The Cayley-Bacharach property gives us a lot of information about the shape of the first difference of Hilbert function. We will mainly use the the following.

\begin{theorem}\label{GKRext}
If a finite set $ Z \subset \Pj^{m} $ satisfies $\mathit{CB}(i)$, then for any $ j $ such that $ 0 \leq j \leq i+1 $ we have
$$ Dh_{Z}(0)+Dh_{Z}(1)+\cdots + Dh_{Z}(j) \leq Dh_{Z}(i+1-j)+\cdots +Dh_{Z}(i+1).$$
\end{theorem}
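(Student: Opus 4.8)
The statement is the "extended Cayley–Bacharach" inequality relating the first difference of the Hilbert function of $Z$ at the "low" end $0,1,\dots,j$ with its values at the "high" end $i+1-j,\dots,i+1$. This is a classical result (it appears in work of Geramita–Kreuzer–Robbiano and in the Davis–Geramita–Orecchia circle of ideas), so let me sketch how I would reconstruct it.

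The plan is to argue by descending induction on $j$, or equivalently to use a dual/pairing argument on the graded pieces. First I would recall the key reformulation of $\mathit{CB}(i)$: a set $Z$ satisfies $\mathit{CB}(i)$ exactly when, for every $P\in Z$, the Hilbert function of $Z\setminus\{P\}$ in degree $i$ equals $h_Z(i)-1$ (removing one point drops the value of $h$ by one in degree $i$, which is the maximal possible drop). Equivalently, $h_{Z\setminus\{P\}}(i)=h_Z(i)-1$ for all $P$. More robustly, the usual approach is through the \emph{socle degree} and a Gorenstein-type duality: one reduces to the Artinian reduction $R/J$, where $R=\C[x_0,\dots,x_m]$ modulo a system of parameters (a regular sequence of linear forms, using that we may assume $Z$ imposes enough conditions), so that $R/J$ is an Artinian graded algebra whose Hilbert function in degree $t$ is precisely $Dh_Z(t)$. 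Then $\mathit{CB}(i)$ translates into the statement that the socle of $R/J$ is concentrated in degrees $\geq i+1$ — i.e. the algebra has no socle elements in degrees $\le i$.

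With that translation in hand, the inequality becomes a statement purely about Artinian graded algebras $B=\bigoplus_{t\ge 0} B_t$ with $B_t=0$ for $t$ large, such that $\mathrm{soc}(B)_t=0$ for $t\le i$: one must show $\sum_{t=0}^{j}\dim B_t \le \sum_{t=0}^{j}\dim B_{i+1-j+t}$ for $0\le j\le i+1$. The key step is the following pairing argument: fix a general linear form $\ell\in B_1$ and consider multiplication maps. Because $B$ has no socle below degree $i+1$, multiplication by a suitable product of general linear forms, $\mu:\,B_{a}\to B_{a+k}$, is injective whenever $a+k\le i$ — an element killed by a general enough form of degree $k$ (with $a+k\le i$) would, by a standard argument peeling off one linear factor at a time and using absence of socle, have to be zero. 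Applying this with $a$ ranging over $0,1,\dots,j$ and $k=i+1-j$ (noting $a+k\le i+1$, and handling the boundary degree $i+1$ separately since there injectivity can fail but is not needed: summing the strict cases suffices, or one invokes that $\dim B_{i+1}\ge 0$), the injective maps $B_a\hookrightarrow B_{a+i+1-j}$ give $\dim B_a\le \dim B_{a+i+1-j}$ term by term; summing over $a=0,\dots,j$ yields exactly the claimed inequality.

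The main obstacle, and the part that needs care rather than invocation, is justifying the injectivity of the general multiplication map $B_a\to B_{a+k}$ from the vanishing of the socle in low degrees — this is where the hypothesis $\mathit{CB}(i)$ is really used, and it requires either a clean induction (multiply by one general linear form at a time: the kernel of $B_t\xrightarrow{\ell}B_{t+1}$ is exactly $(0:_B\ell)_t$, which for general $\ell$ injects into $\mathrm{soc}(B)$ after finitely many steps, hence vanishes when $t$ stays $\le i$) or a reference to the standard Artinian-reduction machinery. I would also need the preliminary reduction that passing to an Artinian reduction does not change $Dh_Z$ — this is standard once one knows $Z$ is reduced and one picks the linear forms generically. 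Apart from these two structural points, the remaining steps are bookkeeping: translating $\mathit{CB}(i)$ into the socle condition, and assembling the term-by-term inequalities into the stated sum. For a self-contained write-up one can alternatively cite Theorem 4.5 (or the relevant statement) of Geramita–Kreuzer–Robbiano, "Cayley–Bacharach schemes and their canonical modules," which is precisely this inequality; but the argument above is the one I would record.
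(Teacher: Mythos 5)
Your plan is not the paper's argument (the paper does not prove this statement at all: it quotes it as Theorem 4.9 of \cite{otto}, which in turn rests on the Geramita--Kreuzer--Robbiano theory you mention), so a self-contained proof would be welcome --- but the sketch has a genuine gap at its central step. After passing to the Artinian reduction $B$ with $\dim B_t=Dh_Z(t)$ and translating $\mathit{CB}(i)$ into a socle condition, you deduce that multiplication by a general form of degree $k$ is injective $B_a\to B_{a+k}$ whenever $a+k\le i$, and you sum the resulting inequalities term by term. Both the injectivity claim and the term-by-term inequality are false, for elementary dimension reasons. Take $Z\subset\Pj^2$ a complete intersection of two cubics ($9$ points): $Z$ satisfies $\mathit{CB}(3)$, $B$ is Gorenstein with $Dh_Z=(1,2,3,2,1)$ and socle concentrated in degree $4$, yet no multiplication map $B_2\to B_3$ can be injective, since $\dim B_2=3>2=\dim B_3$, even though $a+k=3\le i$. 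For $i=3$, $j=3$ the asserted inequality reads $1+2+3+2\le 2+3+2+1$; it holds only because the deficits in the top degrees are compensated by strict inequalities in the bottom degrees, which is invisible to any degree-by-degree comparison. (Already $4$ general points, $Dh_Z=(1,2,1)$, $\mathit{CB}(1)$, kill the case $a=j$, $k=i+1-j$.) The fallback ``summing the strict cases suffices'' does not repair this: dropping the boundary term removes $Dh_Z(j)$ from the left and $Dh_Z(i+1)$ from the right, which is a different (and weaker, not sufficient) statement.

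Two further points. First, your stated reformulation of $\mathit{CB}(i)$ is backwards: $\mathit{CB}(i)$ means $I_{Z\setminus\{P\}}(i)=I_Z(i)$, i.e. $h_{Z\setminus\{P\}}(i)=h_Z(i)$ for every $P\in Z$; the drop $h_{Z\setminus\{P\}}(i)=h_Z(i)-1$ is precisely the existence of a degree-$i$ separator of $P$, i.e. the failure of $\mathit{CB}(i)$ at $P$. Second, the socle/canonical-module translation itself is the right circle of ideas (it is how Geramita--Kreuzer--Robbiano, and hence Theorem 4.9 of \cite{otto}, proceed), but the conclusion one can extract from it is a comparison of \emph{partial sums} of the Hilbert function of $B$ with that of its graded dual $\omega_B$ (which $\mathit{CB}(i)$ forces to be generated in high degrees), not a comparison of individual graded pieces. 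So the write-up must run the duality argument at the level of truncated sums, or simply cite \cite{otto} as the paper does; as it stands, the proposed proof proves a false intermediate statement and cannot be patched by handling the boundary degree separately.
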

\begin{proof} See Theorem 4.9 of \cite{otto}.

\end{proof}

An important case where Cayley-Bacharach holds is the following:

\begin{lemma}\label{CBdis}
Let $T$ be a form of degree $ d $ in $ m+1 $ variables and consider two non-redundant decompositions $A, B$ of $T$. 
Set $Z= A \cup B$. If $A \cap B = \emptyset$, then $Z$ has the Cayley-Bacharach property $CB(d)$.
\end{lemma}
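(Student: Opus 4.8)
The plan is to prove the contrapositive flavored statement directly: assume $Z = A \cup B$ with $A \cap B = \emptyset$, pick an arbitrary point $P \in Z$, and show that every form $F$ of degree $d$ vanishing on $Z \setminus \{P\}$ also vanishes at $P$. By symmetry of the roles of $A$ and $B$, I may assume $P \in A$, so $P = P_i$ for some $i$ and $A \setminus \{P\}$ together with all of $B$ is contained in the vanishing locus of $F$.

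First I would recall that $T \in \langle v_d(A) \rangle$ and $T \in \langle v_d(B) \rangle$, so there are scalars with $T = \sum_{P_j \in A} a_j v_d(P_j) = \sum_{Q_k \in B} b_k v_d(Q_k)$, and since $A$ is non-redundant every $a_j \neq 0$ (similarly every $b_k \neq 0$). The key identification is that a form $F \in Sym^d(\C^{m+1})$ vanishes at a point $R \in \Pj^m$ precisely when $F$, viewed as a linear functional on $Sym^d(\C^{m+1})$ via apolarity/evaluation, annihilates $v_d(R)$; more precisely $F(R) = \langle F, v_d(R)\rangle$ up to the usual nonzero scaling. So "$F$ vanishes on $B$" means the linear functional $F$ kills every $v_d(Q_k)$, hence kills their span, hence $\langle F, T \rangle = \sum_k b_k \langle F, v_d(Q_k)\rangle = 0$. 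On the other hand, using the $A$-decomposition, $\langle F, T\rangle = \sum_{j} a_j \langle F, v_d(P_j)\rangle = \sum_j a_j F(P_j)$ (up to scaling). Since $F$ vanishes on $A \setminus \{P_i\}$, all terms with $j \neq i$ drop out, leaving $a_i F(P_i) = 0$. Because $a_i \neq 0$, we conclude $F(P_i) = 0$, i.e. $F$ vanishes at $P = P_i$, which is exactly $CB(d)$.

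The main obstacle, and the place to be careful, is the bookkeeping with the apolarity pairing and the scalars: one must make sure the pairing $\langle \cdot, \cdot\rangle$ between $Sym^d(\C^{m+1})$ and its dual (in which $v_d(R)$ represents the evaluation-at-$R$ functional, up to a nonzero constant depending on $R$) is set up so that "$F$ vanishes at $R$" is genuinely equivalent to "$\langle F, v_d(R)\rangle = 0$", and that the nonzero constants never cause a sign or vanishing issue. Equivalently, and perhaps cleaner, one argues purely with linear spans: $F$ vanishing on $Z \setminus \{P\}$ means the hyperplane $H_F = \{G : \langle F, G\rangle = 0\}$ in $\Pj^N$ contains $v_d(Z \setminus \{P\})$; it contains all of $v_d(B)$, hence $\langle v_d(B)\rangle \ni T$, so $T \in H_F$; but $T \in \langle v_d(A)\rangle$ and $H_F$ contains $v_d(A \setminus \{P\})$, and since the coefficient of $v_d(P)$ in the expression of $T$ over $v_d(A)$ is nonzero (non-redundancy), forcing $T \in H_F$ together with $v_d(A\setminus\{P\}) \subset H_F$ forces $v_d(P) \in H_F$ as well, i.e. $F(P) = 0$.

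Formally, if $v_d(P) \notin H_F$ then $\langle v_d(A \setminus \{P\}) \rangle \subset H_F$ but $\langle v_d(A)\rangle \not\subset H_F$, and writing $T = \sum_j a_j v_d(P_j)$ with $a_i \neq 0$ shows $T \in H_F$ would force $a_i v_d(P_i) = T - \sum_{j\neq i} a_j v_d(P_j) \in H_F$, hence $v_d(P_i) \in H_F$, a contradiction. Thus $v_d(P) \in H_F$, i.e. $F(P)=0$. Since $P \in Z$ was arbitrary, $Z$ satisfies $CB(d)$, completing the proof. I expect no serious difficulty beyond this linear-algebraic care; the disjointness hypothesis $A \cap B = \emptyset$ is used precisely to guarantee that $Z \setminus \{P\}$ still contains all of one of the two decompositions (namely $B$ when $P \in A$), which is what drives $T \in H_F$.
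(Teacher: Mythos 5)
Your proof is correct. The paper itself does not prove this lemma but simply cites Lemma 5.3 of \cite{nove}, and your argument is precisely the standard one behind that reference: identify vanishing of $F$ at $R$ with $v_d(R)$ lying in the hyperplane $H_F$, use disjointness so that $Z\setminus\{P\}$ still contains the whole of the other decomposition (forcing $T\in H_F$), and use non-redundancy to guarantee the coefficient of $v_d(P)$ is nonzero, so $v_d(P)\in H_F$ as well. One small point worth making explicit: non-redundancy gives that \emph{every} expression of $T$ as a combination of $v_d(A)$ has all coefficients nonzero (a zero coefficient would exhibit $T$ in the span of a proper subset), so the argument does not even need uniqueness of the coefficients; with that observation your linear-algebraic bookkeeping is complete.
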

\begin{proof} See Lemma 5.3 of \cite{nove}.

\end{proof}

We conclude this section by recalling an important tool used to prove identifiability of forms: the Reshaped Kruskal's criterion. 
We point out that Kruskal's theorem gives us a method to determine the identifiability of a form which is effective
(in the sense of \cite{sette}) but it can be expensive from the point of view of computational costs, especially when the decomposition $A$ is not in general position. 

\begin{theorem} \label{K} ({\bf Reshaped Kruskal's criterion}) Let $T$ be a form of degree $d$ (in any number of variables) and let $A$ be a non-redundant decomposition of $T$ with $\ell(A)=r$. Fix a partition $a,b,c$ of $d$ and call $k_a,k_b,k_c$ the Kruskal's ranks of $v_a(A),v_b(A), v_c(A)$ respectively.
If: $$ r\leq \frac {k_a+k_b+k_c-2}2,$$
then $T$ has rank $r$ and it is identifiable.
\end{theorem}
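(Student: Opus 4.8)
The plan is to follow the classical Kruskal argument, reshaped to the Veronese setting. Suppose for contradiction that $B$ is another decomposition of $T$ with $\ell(B)\le r$ and $B\ne A$; we may take $B$ non-redundant. Passing to $A'=A\setminus(A\cap B)$ and $B'=B\setminus(A\cap B)$, we reduce to the situation where $A'\cap B'=\emptyset$ and $A'\ne\emptyset$ (if $A'=\emptyset$ then $A\subseteq B$, forcing $A=B$ by non-redundancy of $B$). Set $Z=A'\cup B'$, a finite set with $\ell(Z)=\ell(A')+\ell(B')$. The point $T$ lies in $\langle v_d(A')\rangle\cap\langle v_d(B')\rangle$ (after removing the common summands, whose contribution is fixed), so $v_d(Z)$ is linearly dependent; in fact $\langle v_d(A')\rangle$ and $\langle v_d(B')\rangle$ meet. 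The core of the proof is a counting inequality on $h_Z(d)$.

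First I would bound $h_Z(d)$ from above using the partition $a+b+c=d$ and the Kruskal ranks. For any subset $W\subseteq A'$ (resp. $W\subseteq B'$), the span $\langle v_e(W)\rangle$ has dimension $\min\{\ell(W),k_e\}$ by definition of the $e$-th Kruskal rank; this is the standard "reshaping" observation that lets one multiply forms of degrees $a,b,c$ to control forms of degree $d$. Concretely, if $\dim\langle v_d(Z)\rangle = h_Z(d)-1$ is too small relative to $\ell(Z)$, then some proper subset of $v_d(Z)$ already spans $T$, contradicting non-redundancy of $A$ or $B$; on the other hand, the product structure $v_a\cdot v_b\cdot v_c$ together with the hypothesis $r\le\frac{k_a+k_b+k_c-2}{2}$ forces $v_d(Z)$ to impose "few" conditions. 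Combining these two opposing estimates yields the contradiction. Technically, one shows that the assumption on $r$ implies $h_Z(d)\ge \ell(Z)$ would be violated in a way incompatible with Proposition \ref{d+1}, or dually that the Kruskal-rank bound gives $h_Z(d)<\ell(Z)$ with enough slack that the common point $T$ cannot exist unless $A'=B'=\emptyset$.

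The key steps, in order: (1) reduce to disjoint $A',B'$ and record $Z=A'\cup B'$ with the two spans meeting at $T$; (2) prove the reshaping lemma — for a set $W$ in linearly general enough position the Veronese span dimension is governed by the Kruskal ranks, so that multiplication of a degree-$a$, a degree-$b$ and a degree-$c$ form separating points of $A'$ from $B'$ (or isolating a single point) produces a degree-$d$ form with prescribed vanishing; (3) use this to show that if $r\le\frac{k_a+k_b+k_c-2}{2}$ then for each $P\in A'$ there is a degree-$d$ form vanishing on $Z\setminus\{P\}$ but not at $P$, and symmetrically for $B'$ — i.e.\ one produces the separating forms explicitly by a pigeonhole/dimension count on the three factors; (4) conclude that $v_d(Z\setminus\{P\})$ does not span $v_d(P)$, hence $Z$ has a point imposing an independent condition, which when iterated shows $h_Z(d)=\ell(Z)$, contradicting Proposition \ref{d+1}; therefore $A'=B'=\emptyset$, $A=B$, so $A$ is the unique decomposition of length $\le r$, hence minimal and $T$ identifiable with $\rank(T)=r$.

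The main obstacle is step (3): constructing, from the three Kruskal ranks and the bound on $r$, a degree-$d$ form that vanishes on all of $Z$ except one prescribed point. The dimension count must be done carefully — one splits $Z$ into two parts of sizes controlled by $k_a-1$, $k_b-1$, $k_c-1$ and uses that a set of at most $k_e-1$ points in the $v_e$-image is contained in a hyperplane missing any further prescribed point, then multiplies the three hyperplane equations; making the bookkeeping match exactly the threshold $r\le\frac{k_a+k_b+k_c-2}{2}$ is the delicate part, and it is precisely here that the "$-2$" and the division by $2$ enter. Everything else is either the elementary reduction in step (1) or a direct appeal to Proposition \ref{d+1} in step (4).
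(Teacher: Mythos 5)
There is a genuine gap, and it sits exactly where you flag the ``delicate bookkeeping'': step (3) cannot be carried out from the stated hypotheses. Your separation argument needs, for a point $P\in Z=A'\cup B'$, a splitting of $Z\setminus\{P\}$ into three groups of sizes at most $k_a-1$, $k_b-1$, $k_c-1$ such that each group admits a form of degree $a$ (resp.\ $b$, $c$) vanishing on it but not at $P$; the existence of such forms is guaranteed only when the relevant Veronese images are linearly independent, i.e.\ when one controls the Kruskal ranks of the set actually being separated. But the theorem's hypothesis bounds only $k_a,k_b,k_c$ of the \emph{known} decomposition $A$; the points of the competing decomposition $B$ are completely unconstrained, so a group containing points of $B'$ may well force every degree-$a$ form vanishing on it to vanish at $P$ as well (think of points of $B'$ in special position with respect to $P$ in low degree). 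Hence you cannot conclude $h_Z(d)=\ell(Z)$, and the intended contradiction with Proposition \ref{d+1} (equivalently, with the Cayley--Bacharach property of Lemma \ref{CBdis}) is out of reach by this counting. This asymmetry --- hypotheses on one decomposition, conclusion against all others --- is precisely the hard content of Kruskal's theorem (the permutation lemma of \cite{tredici}), and it is why the paper does not argue directly: Theorem \ref{K} is proved in \cite{dieci} by reshaping $T$ into a three-way tensor in $Sym^a(\C^{m+1})\otimes Sym^b(\C^{m+1})\otimes Sym^c(\C^{m+1})$ whose factor matrices have Kruskal ranks $k_a,k_b,k_c$, and then invoking Kruskal's uniqueness theorem for that tensor (plus the observation that any Waring decomposition of $T$ induces a three-way decomposition, so uniqueness descends). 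A secondary, smaller inaccuracy: your ``reshaping lemma'' asserting $\dim\langle v_e(W)\rangle=\min\{\ell(W),k_e\}$ is only a lower bound when $\ell(W)>k_e$; equality can fail. If you want a self-contained proof, you must either reprove the permutation-lemma machinery in the reshaped setting or add hypotheses on $A\cup B$, which the theorem does not grant.
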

\begin{proof} See Section 4 of \cite{dieci}.

\end{proof}

A direct application of this criterion is the following proposition.

\begin{proposition}
Fix $ n \in \N $. Given $T$ a form in three variables of degree $d=8+2n$ such that:

\begin{itemize}
	\item[1)]$A=\{P_1,...,P_r\}\subset\Pj^2$ is a decomposition of $ T $ of length $r=\ell(A)\leq 11+3n$.
	\item[2)] The second Kruskal's rank $ k_2 $ of $A$ is $k_2=\min\{6,r\}$.
	\item[3)] The $(n+3)$-th Kruskal's rank $k_{n+3}$ of $A$ is $k_{n+3} \geq \{r,3n+9\}$.
\end{itemize} 

Then, $T$ is identifiable.
\label{proposition:1}
\end{proposition}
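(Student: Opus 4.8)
The plan is to derive the statement directly from the Reshaped Kruskal's criterion (Theorem~\ref{K}), applied to the partition
\[
a = 2, \qquad b = c = n+3
\]
of $d = 2n+8$. With this choice the relevant Kruskal's ranks are $k_a = k_2$ and $k_b = k_c = k_{n+3}$, precisely the quantities controlled by hypotheses (2) and (3), and Theorem~\ref{K} yields that $T$ has rank $r$ and is identifiable as soon as
\[
r \ \le\ \frac{k_2 + 2\,k_{n+3} - 2}{2}, \qquad\text{equivalently}\qquad 2r + 2 \ \le\ k_2 + 2\,k_{n+3}.
\]
So the whole proof reduces to checking this numerical inequality.

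Before doing so I would dispose of two degenerate situations. If $r = 1$, then $T$ is a nonzero multiple of $v_d(P_1)$, i.e.\ a $d$-th power of a linear form, hence trivially identifiable; so assume $r \ge 2$. If $A$ is not non-redundant (which Theorem~\ref{K} requires), replace it by a non-redundant subset $A' \subseteq A$ that still decomposes $T$: by Remark~\ref{ksub}, applied to $v_j(A') \subseteq v_j(A)$, one has $k_j(A') \ge \min\{\ell(A'), k_j(A)\}$ for all $j$, so, writing $r' = \ell(A') \le r$, the set $A'$ satisfies $r' \le 3n+11$, $k_2(A') \ge \min\{6, r'\}$ and $k_{n+3}(A') \ge \min\{r', 3n+9\}$, i.e.\ hypotheses of the same shape with $r$ replaced by $r'$. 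Since identifiability of $T$ would then follow from $A'$, we may harmlessly assume $A$ itself non-redundant and keep the name $r$ for its length.

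The heart of the argument is the verification of the displayed inequality, which I would split according to how $r$ compares with $3n+9$. If $2 \le r \le 3n+9$, hypothesis (3) gives $k_{n+3} \ge \min\{r, 3n+9\} = r$, while in general $k_{n+3} \le \ell(A) = r$, so $k_{n+3} = r$; moreover $k_2 = \min\{6, r\} \ge 2$, hence $k_2 + 2k_{n+3} \ge 2 + 2r = 2r + 2$. If instead $3n+10 \le r \le 3n+11$, then $r \ge 10 > 6$, so hypothesis (2) gives $k_2 = 6$, and hypothesis (3) gives $k_{n+3} \ge \min\{r, 3n+9\} = 3n+9$; therefore
\[
k_2 + 2\,k_{n+3} \ \ge\ 6 + 2(3n+9) \ =\ 6n + 24 \ =\ 2(3n+11) + 2 \ \ge\ 2r + 2,
\]
the last inequality being hypothesis (1).

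In both ranges the hypothesis of Theorem~\ref{K} holds for the partition $(2, n+3, n+3)$, so $T$ has rank $r$ and is identifiable, which completes the proof. I do not expect a genuine obstacle here: the only substantive choices are the partition $(2, n+3, n+3)$ — tailored so that its parts are exactly $2$ and two copies of $n+3$, matching hypotheses (2) and (3) — and the case split. The extremal value $r = 3n+11$ is the tight one: there the Kruskal inequality is attained with equality and one must use the precise value $k_2 = 6$ together with the sharp bound $k_{n+3} \ge 3n+9$, which is exactly why the length bound in hypothesis (1) is stated as $r \le 11 + 3n$. (One may also observe the hypotheses are consistent, since $k_{n+3} \le \binom{n+5}{2}$ and $\binom{n+5}{2} \ge 3n+9$ for every $n \ge 0$.)
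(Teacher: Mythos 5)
Your proof is correct and follows essentially the same route as the paper: apply the reshaped Kruskal's criterion to the partition $d=(n+3)+(n+3)+2$ and verify the inequality $r\le\frac{k_2+2k_{n+3}-2}{2}$ by splitting on whether $r\le 3n+9$ or $3n+10\le r\le 3n+11$. Your treatment is in fact slightly tidier (you handle the non-redundancy requirement of Theorem~\ref{K} explicitly and avoid the paper's unnecessary subcases $r>6$ / $r<6$, which as written skip $r=6$), but the argument is the same.
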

\begin{proof} If $r=1$ there is nothing to prove so assume $r>1$.
We consider the partition $p= 8 +2n =(3+n)+(3+n)+2 $ and we want to prove that $$r \leq \frac{k_{n+3}+k_{n+3}+k_2 -2 }2.$$

Assume that $r=\ell(A)$ is smaller than $3n+10$. Then the third assumption means that the $(n+3)$-th Kruskal rank $k_{n+3}$ of $A$ is $r$. 
Take the partition $d=8+2n=(n+3)+(n+3)+2$. Then we have:
$$\frac {k_{n+3}+k_{n+3}+k_2 -2 }2 = \frac {2r+k_2 -2} 2.$$

Now there are two different cases. If $r>6$ then $$\frac {k_{n+3}+k_{n+3}+k_2 -2 }2 = \frac {2r+6 -2} 2 = r +2 \geq r.$$
thus, by the reshaped Kruskal's criterion, we get that $T$ has rank $r$ and $A$ is the unique decomposition
of $T$.

If $r<6$ then $$\frac {k_{n+3}+k_{n+3}+k_2 -2 }2 = \frac {3r -2} 2 = r + \frac {r -2} 2.$$ 

From $r \geq 2$ we have: $$r + \frac {r -2} 2 \geq r $$
and, as before, by the reshaped Kruskal's criterion, we get that $T$ is a form of rank $r$ and it is identifiable.

 Assume that $r=\ell(A)$ is bigger or equal than $3n+10$. Take the partition $d=8+2n=(n+3)+(n+3)+2$:
 $$ \frac{k_{n+3}+k_{n+3}+k_2-2}{2} \geq \frac{18+6n+k_2-2}{2} $$
 
 In this case $ r \geq 10+3n >6$ and $k_{n+3} \geq 3n+9$ so:
$$\frac {k_{n+3}+k_{n+3}+k_2 -2 }2 \geq \frac {18+6n+6 -2} 2 = 11+3n \geq r$$ so we have that $T$ is identifiable by the reshaped Kruskal's criterion.

\end{proof}

\begin{remark}
In the previous proposition we take as a lower bound for $ k_{n+3} $ the value $ 3n+9 $. This is due to the fact that we consider particularly interesting for our investigation the case in which a decomposition $ A=\{P_1,\dots ,P_r\} \subset \Pj^2 $ of a form $ T $ in three variables is contained in a unique plane cubic curve (so a case in which the points of $ A $ are not in general position). 

In fact, we will see in Remark \ref{costcomp} that this is a situation in which using Kruskal's criterion can be rather demanding from a computational point of view. Thus, this is the situation in which the improvement due to our method is more effective. So, we will focus on the cases in which $ k_{n+3} $ is equal to $ \min\{ r,3n+9 \} $.
\end{remark}
 
\begin{remark}
Suppose that, in the situation of Proposition \ref{proposition:1}, we know that \\ $ k_{n+3}=\min \{ r,3n+9\} $. In this case we observe that $11+3n$ is the maximum value of $r$ for which one can hope to prove the identifiability by using the reshaped Kruskal's criterion.

Indeed, in Proposition \ref{proposition:1} we showed that when $d=8+2n$, if we take the partition $a_1=2$ $a_2=a_3=3+n$
of $d$, then the reshaped Kruskal's criterion tells us that forms of rank $r=11+3n$, whose decomposition satisfies $k_2=\min\{6,r \}$ 
and $k_{n+3}=\min\{r,3n+9 \}$, are identifiable.

We show that there are no partitions for which the reshaped Kruskal's criterion determines the identifiability when $r > 11+3n$.

Indeed, take another partition $b_1$, $b_2$ and $b_3$. We can write each $b_i$ as $b_i=3+a_i$ with $a_i= -2, -1,0,1,2 \dots $.

We claim that $k_{b_i} \leq 9+3a_i $.
Indeed, if $a_i=-2$ then $b_i=1$ and $k_1$ is at most $3$, by definition. For the same reason, if $a_i=-1$ then $b_i=2$ 
and $k_2$ is at most $6$ by definition. 
Suppose $b_i \geq 0$. As we said in remark \ref{remark:span}, we have to find the dimension of $\left\langle v_{b_i}(A) \right\rangle$.
In our case, all the points of $A$ are contained in a cubic curve by hypothesis. Moreover, a cubic curve is a normal elliptic curve and the image of a normal elliptic curve under a Veronese map is again a normal elliptic curve. Thus, $v_d(C)$ generates a projective space of dimension $ \deg(v_d(C))-1= 3d-1.$

So we have: \begin{multline*}\frac{k_{b_1} + k_{b_2} + k_{b_3}-2}{2} \leq \frac{9+9+9 + 3(a_1+a_2+a_3)-2}{2} = \\ =\frac{27+3(2n-1)-2}{2}=11+3n 
\end{multline*}
\end{remark}

\begin{remark} 
	\label{costcomp}
In order to use the reshaped Kruskal's criterion and apply Proposition \ref{proposition:1} to a decomposition $A=\{ P_1,\dots, P_r \} \subset \Pj^2$ of a form $T$ in three variables we need to compute that:
\begin{itemize}
\item $k_2(A)= \min\{ r, 6 \}$;
\item $k_{n+3}(A) \geq \min \{ r,9+3n \} $; 
\end{itemize}
and verify that the inequality $r \leq \frac{k_1+k_2+k_3-2}{2}$ holds.

This is done by determining the rank of matrices derived by the coordinates of the points of some Veronese images of $A$.
The standard method to find the rank of a matrix is the Gauss elimination method. 
The computational cost of computing the rank of a matrix $\C^{m \times n}$ using this method is of $\frac{2}{3} m^2 \cdot n$ flops (see Chapter 3.3 of \cite{sei}). 
In particular, if the matrix is a square matrix $ n \times n $ then the Gauss elimination method has a cost in the order of $ \frac{2}{3} n^3$.

In order to verify that $k_2(A)= \min \{ 6,r \}$ we have to compute the rank of all the $6 \times 6$ sub-matrices of the matrix $[v_2(P_1), \dots , v_2(P_r) ]$.
So we have to find the rank of $\binom{r}{6} \approx \frac{r^6}{6!}$ matrices. By using the Gauss elimination algorithm,
 we see that the computational cost is about
$$\frac{2\cdot 6^3}{3} \cdot \frac{r^6}{6!} = \frac{r^6}{5}.$$ 

In the same way, to verify $k_{n+3}(A) \geq \min \{ 9+3n,r \}$ we may have to compute the rank of all the submatrices $\binom{n+5}{2} \times 3n+9$ 
of the matrix: $$[v_{n+3}(P_1), \dots , v_{n+3}(P_r) ].$$ So, the worst case is when $r=11+3n $, where we have to find the rank of 
$\binom{r}{2} \approx \frac{r^2}{2!}$ matrices and the computational cost is about 
$$\frac{2}{3} \cdot (r-2)^2 \cdot \binom{r/3 + 4}{2} \cdot \frac{r^2}{2} \approx \frac{r^6}{54}.$$

Thus, for a general set $A$ which verifies the conditions of Proposition \ref{proposition:1}, the total cost of the computation is about 
$$ \frac{59 r^6}{270}.$$ 
\end{remark}

\section{Removing assumptions on the Kruskal's ranks}
\label{Removing assumptions on the Kruskal's ranks}
As we showed in the previous section, the reshaped Kruskal's criterion can be rather demanding from a computational point of view. So we would like to use another strategy to verify that a form is identifiable.
 \smallskip
 
 Through this section, fix $n\geq 0$ and take a form $T$ of degree $d=8+2n$ in three variables, with a decomposition 
 $A=\{P_1,\dots,P_r\}\subset\Pj^2$ of length $r=\ell(A)\leq 11+3n$. \emph{We will always assume that $A$ is non-redundant}, 
 a condition that is easy to check: it suffices to prove that $v_d(A)$ is linearly independent and all the coefficients $a_i$'s of the decomposition are non-zero.
 
 We will prove that $T$ is identifiable unless there are infinitely many decompositions of $T$. 

\begin{remark}
\label{shh}

We will make no hypothesis on how the points of the decompositions $ A $ of $ T $ are located in the plane. We can prove that this method is more efficient than reshaped Kruskal criterion, even when $ A $ is contained in some cubic curves. In fact we do not need the computation of $ k_2 $. 
\end{remark}

We start our analysis with the case in which we cannot find $ 5 +n $ points of $ A $ aligned or $ 9+2n $ points of $ A $ in a conic curve. Then, we will analyse separately the cases in which there are $ 5+n $ points of $ A $ aligned and the case in which there are $ 9+2n $ points of $ A $ contained in a conic curve. Finally, we will use an inductive strategy to prove the main result of the section.

\begin{proposition} Fix $ n \in \N $. Take $ A \subset \Pj^2 $ a decomposition of a form $ T $ in 3 variables of degree $ d $. Suppose that $ r=\ell(A) $ is at most $ 11+3n $ and $ A $ is non-redundant. If $A$ does not contain $ 5+n $ points on a line or $ 9+2n $ points in a conic curve, then $ CB(d) $ cannot hold for $ A\cup B $ where $ B $ is another non-redundant decomposition of $ T $ of length $ \ell(B) \leq r $. Thus, $A\cap B\neq \emptyset$.
\label{claim:2}
\end{proposition}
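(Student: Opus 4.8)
The plan is to argue by contradiction: suppose that $B$ is a non-redundant decomposition of $T$ with $B\neq A$ and $\ell(B)\le r$, and that $Z:=A\cup B$ satisfies $CB(d)$, where $d=8+2n$. The aim is to deduce that $A$ then contains $5+n$ collinear points or $9+2n$ points on a conic, contrary to hypothesis. This proves the displayed statement, and the concluding remark $A\cap B\neq\emptyset$ follows at once from Lemma \ref{CBdis}: were $A\cap B=\emptyset$, then $Z$ would satisfy $CB(d)$, which has just been excluded. Throughout I would use that $v_d(A)$ and $v_d(B)$ are linearly independent (non-redundancy), that $\ell(Z)\le\ell(A)+\ell(B)\le 2r\le 22+6n$, and that $\ell(B)\le\ell(A)=r$ forces $\ell(A)\ge\lceil\ell(Z)/2\rceil$, a bound used repeatedly.

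Next I would pin down the shape of $Dh_Z$. Since $A\neq B$, Proposition \ref{d+1} gives $Dh_Z(d+1)>0$, so Proposition \ref{nonincr} yields $Dh_Z(i)\ge 1$ for all $0\le i\le d+1$. Combining the elementary bound $Dh_Z(i)\le i+1$ for finite subsets of $\Pj^2$ with Proposition \ref{nonincr}, one sees that $Dh_Z$ equals $1,2,\dots,\tau+1$ on $\{0,\dots,\tau\}$ and is non-increasing from index $\tau$ on, for some $\tau\ge 0$; in particular $h_Z(\tau)=\binom{\tau+2}{2}$, $Dh_Z(i)\le\tau+1$ for $i\ge\tau$, and $\binom{\tau+2}{2}=h_Z(\tau)\le\ell(Z)\le 22+6n$ already forces $\tau<d$. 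The heart of the argument is then the inequality $\tau\le 1$. Theorem \ref{GKRext} with $i=d$ gives $h_Z(k)+h_Z(d-k)\le h_Z(d+1)\le\ell(Z)\le 22+6n$ for $0\le k\le d$. Taking $k=\tau$, and combining $h_Z(\tau)=\binom{\tau+2}{2}$ with the tail bound $Dh_Z(i)\le\tau+1$ and with the requirement that $Dh_Z$ reach index $d+1=9+2n$ with a positive value, a short lower estimate of $\ell(Z)$ gives $\ell(Z)\ge(\tau+1)(10+2n-\tau)$, and $(\tau+1)(10+2n-\tau)>22+6n$ as soon as $\tau=2$; hence $\tau\le 1$. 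The one configuration this estimate does not cover is when the non-increasing tail of $Dh_Z$ is \emph{strictly} decreasing: then it has at most $\tau+1$ terms, so reaching index $d+1$ with value $\ge 1$ forces $\tau$ large, whence $\binom{\tau+2}{2}$ alone exceeds $22+6n$ unless $n\le 1$, and the finitely many possibilities left for $n=0,1$ I would dispatch by inspection.

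Finally, knowing $\tau\le 1$ I would read off the profile of $Dh_Z$ and conclude. If $\tau=0$, then $Dh_Z$ is identically $1$ on its support, so $Dh_Z(1)=1$ and $Z$ lies on a line $L$, with $\ell(Z)\ge d+2=10+2n$; then $\ell(A)\ge 5+n$ and $A\subseteq Z\subseteq L$, so $A$ contains $5+n$ collinear points — contradiction. If $\tau=1$, then $Dh_Z(1)=2$, so $Z$ does not lie on a line, and the tail can carry at most one value $1$ (two consecutive $1$'s would, by Theorem \ref{BGM}, force all of $Z$ onto a line); hence $Dh_Z=(1,2,\dots,2)$ or $(1,2,\dots,2,1)$ with at least three $2$'s, and Theorem \ref{BGM} applied to the constant stretch at height $2$ produces a reduced conic $C$ with $Z\cap C=Z$ (the Hilbert functions agree in all large degrees), while counting entries of $Dh_Z$ gives $\ell(Z)\ge 18+4n$, so $\ell(A)\ge 9+2n$. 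If $C$ is smooth, then $A$ contains $9+2n$ points on a conic — contradiction. If $C=L_1\cup L_2$ is a pair of lines, then $A\subseteq L_1\cup L_2$ gives $\ell(A)\le \#(A\cap L_1)+\#(A\cap L_2)$, and neither summand can reach $5+n$ (else $A$ has $5+n$ collinear points), so $\ell(A)\le 8+2n<9+2n\le\ell(A)$ — contradiction. I expect the step $\tau\le 1$ to be the main obstacle: it requires balancing the Cayley--Bacharach inequalities of Theorem \ref{GKRext} against the length bound $\ell(Z)\le 2r$ quite tightly, and the strictly-decreasing-tail subcase calls for a separate, slightly delicate numerical check.
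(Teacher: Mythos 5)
Your overall strategy (assume $CB(d)$ for $Z=A\cup B$, pin down the profile of $Dh_Z$, then use Theorem \ref{BGM} to force $A$ onto a line or conic, and get $A\cap B\neq\emptyset$ from Lemma \ref{CBdis}) is a legitimate alternative to the paper's case analysis on $Dh_A$, and your endgame for $\tau\le 1$ is essentially sound. But the step you yourself flag as the heart of the argument --- ``$\ell(Z)\ge(\tau+1)(10+2n-\tau)$, hence $\tau\le 1$'' --- has a genuine gap: that inequality does not follow from the ingredients you list, and as a purely numerical implication it is false. Theorem \ref{GKRext} with $j=\tau$ only says that the last $\tau+1$ values of $Dh_Z$ sum to at least $\binom{\tau+2}{2}$, so by monotonicity the middle values are bounded below by roughly $(\tau+2)/2$, not by $\tau+1$; the bound $(\tau+1)(10+2n-\tau)$ would need $Dh_Z(i)\ge\tau+1$ across the whole middle range. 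Concretely, the profile $Dh_Z=(1,2,3,2,2,\dots,2)$ with the value $2$ from degree $3$ through degree $d+1=9+2n$ has $\tau=2$, reaches $d+1$ with a positive value, respects $Dh_Z(i)\le\tau+1$, satisfies every partial-sum inequality of Theorem \ref{GKRext} (all of them hold with equality or slack), and has length $20+4n\le 22+6n$. So $\tau=2$ is not excluded by your estimate, and the ``strictly decreasing tail'' caveat you add does not repair this, since that profile has a constant, not strictly decreasing, tail.

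To close the gap you must treat exactly this kind of intermediate configuration, which is where the paper spends most of its effort: it runs a case analysis on $Dh_A(3+n)$ and $Dh_A(4+n)$ and applies Theorem \ref{BGM} to $A$ to convert the values $1,1$ or $2,2$ into $5+n$ collinear points or $9+2n$ points on a conic, reserving the counting argument for the remaining subcases. In your framework one could instead apply Theorem \ref{BGM} to the long stretch of $2$'s in $Dh_Z$: it yields a conic containing all but one point of $Z$, and since $\ell(Z)\ge 2d+4$ forces $\ell(A)\ge\lceil\ell(Z)/2\rceil\ge 10+2n$, at least $9+2n$ points of $A$ lie on that conic, contradicting the hypothesis (with the reducible-conic case handled by pigeonhole as in your $\tau=1$ argument). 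Without some such geometric supplement --- i.e.\ using $\ref{BGM}$ also in the range $\tau=2$, not only after $\tau\le1$ is known --- the proof as written does not go through.
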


In order to prove Proposition \ref{claim:2} we will use Theorem 1.5.1 of \cite{undici}. We cite the statement of the theorem below.

\begin{theorem}
	Let $T$ be a form of degree $d$ in $ m +1$ variables, with a non-redundant decomposition $A \subset \Pj^{m}$. If $\ell(A)\leq \frac{d+1}{2}$ then $A$ is minimal
	and $T$ is identifiable.
	\label{theorem:1}
\end{theorem}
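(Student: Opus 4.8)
The plan is to prove the single sharper statement that \emph{any} decomposition $B$ of $T$ with $\ell(B)\le \ell(A)$ must coincide with $A$; both minimality and identifiability will then follow immediately. First I would reduce to the case where $B$ is non-redundant: if $B$ were redundant I replace it by a non-redundant subset $B'\subseteq B$, which is still a decomposition of $T$ and satisfies $\ell(B')\le \ell(B)\le \ell(A)\le \frac{d+1}2$. So it suffices to handle two non-redundant decompositions $A,B$ of $T$ with $\ell(A),\ell(B)\le \frac{d+1}2$ and to show they are equal.

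The core of the argument is a short Hilbert-function computation on the union $Z=A\cup B$. Suppose, for contradiction, that $A\neq B$. On the one hand, the length bound gives
$$\ell(Z)\le \ell(A)+\ell(B)\le 2\ell(A)\le d+1,$$
so $\ell(Z)-1\le d$, and property 5) of Lemma \ref{lemma:mix} yields $h_Z(d)=\ell(Z)$. On the other hand, $A$ and $B$ are two \emph{distinct} decompositions of the degree-$d$ form $T$, so Proposition \ref{d+1} gives $h_Z(d)<\ell(Z)$. These two statements are contradictory, whence $A=B$.

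Finally I would assemble the conclusions from this core step. For minimality: a decomposition $B$ with $\ell(B)<\ell(A)$ refines to a non-redundant $B'$ of strictly smaller length, which is automatically distinct from $A$ and still satisfies $\ell(A)+\ell(B')<2\ell(A)\le d+1$, contradicting the core step; hence no shorter decomposition exists and $A$ is minimal. For identifiability: any second minimal decomposition $B$ has $\ell(B)=\ell(A)$ and, being minimal, is non-redundant, so the core step forces $B=A$, giving uniqueness. I do not expect a genuine obstacle in the main inequality, since it is just the collision of Lemma \ref{lemma:mix} with Proposition \ref{d+1}; the only points demanding care are the reduction to non-redundant decompositions (needed so that Proposition \ref{d+1} applies, its proof relying on the fact that the two expressions for $T$ produce a genuine linear dependence among $v_d(Z)$ precisely because all coefficients are nonzero and $A\neq B$) and the boundary case $\ell(A)=\frac{d+1}2$, which occurs only for $d$ odd and still gives $\ell(Z)-1\le d$ with equality permitted in property 5).
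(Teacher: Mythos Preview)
Your argument is correct. The collision you set up between property~5) of Lemma~\ref{lemma:mix} (which forces $h_Z(d)=\ell(Z)$ once $\ell(Z)\le d+1$) and Proposition~\ref{d+1} (which forces $h_Z(d)<\ell(Z)$ whenever $A\neq B$) is exactly the right mechanism, and your bookkeeping for minimality and identifiability is clean.

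As for comparison with the paper: there is nothing to compare, because the paper does not give a proof of this theorem at all --- it simply cites Theorem~1.5.1 of~\cite{undici}. Your proof is therefore strictly more informative in the context of this paper, since it is self-contained and uses only two results (Lemma~\ref{lemma:mix} and Proposition~\ref{d+1}) that the paper has already stated. One small remark: Proposition~\ref{d+1} as stated in the paper does not actually require $A$ and $B$ to be non-redundant, only that they be \emph{different} decompositions, so your reduction step is harmless but not strictly necessary for invoking it; it does, however, make the length bookkeeping cleaner, so there is no reason to remove it.
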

\begin{proof} See Theorem 1.5.1 of \cite{undici}.

\end{proof}

Now, we have all we need to prove Proposition \ref{claim:2}.

\vspace{0.5 cm}

\emph{Proof of Proposition \ref{claim:2}}: Fix $ n \in \N $ and set $A= \{ P_1,P_2, \dots P_{r} \}$ a non-redundant decomposition of a form $ T \in \Pj(Sym^d(\C^3))$ contained in a cubic curve such that $ r\leq 11+3n $. 

We analyse the following cases:
\begin{itemize}
	\item[1)] $ r < 5+n $.
	\item[2)] $ 5+n \leq r < 9+2n $. 
	\item[3)] $ 9+2n \leq r \leq 11+3n $.
\end{itemize}
\vspace{5mm}
If $ r < 5+n $ the results follows directly from Theorem \ref{theorem:1}. 

\vspace{5mm}

Suppose $ 5+n \leq r < 9+2n $. From the fact that there are not $ 5+n $ points of $ A $ aligned we know that $ A $ itself is not aligned. Assume that $CB(d)$ holds for $Z=A\cup B$. Thus, by Theorem \ref{GKRext} we have
$$\sum_{j=0}^{1} Dh_Z(j) \leq \sum_{j=2n+8}^{2n+9}Dh_Z(j).$$ 
So, $\sum_{j=0}^{1}Dh_Z(j)\geq \sum_{j=0}^{1}Dh_A(j)=3$ (Lemma \ref{lemma:mix}) then from Proposition \ref{nonincr} we have that $Dh_Z(2n+8)$ 
is at least equal to $2$ otherwise $Dh_A(2n+8)+Dh_A(2n+9)$ would be less or equal to 2. 
From the fact that $A$ is not aligned we know that $Dh_Z(1) \geq Dh_A(1)=2$.
We get:
\begin{multline*} 18+4n > \ell(Z) \geq \sum_{j=0}^{2n+9}Dh_Z(j) = \\ = \sum_{j=0}^{1}Dh_Z(j)+\sum_{j=2}^{2n+7} Dh_Z(j)+\sum_{j=n+8}^{2n+9}Dh_Z(j) \geq
\\ \geq 2 \sum_{j=0}^{1}Dh_Z(j) + \sum_{j=2}^{2n+7} Dh_Z(j) \geq 6 + \sum_{j=2}^{2n+7} Dh_Z(j). \end{multline*}
Thus $\sum_{j=2}^{2n+7} Dh_Z(j) < 12+4n $. But then, by Proposition \ref{nonincr} and from the fact that $Dh_Z(2)\geq 2$ and $Dh_Z(2n+7)\geq 2$, we have:
$$12+4n > \sum_{j=3}^{2n+7}Dh_Z(j) \geq 2\cdot(2n+6)$$
a contradiction.

The second claim follows by Lemma \ref{CBdis} applied to $A,B$ and $T$.

\vspace{5mm}

Suppose now $ 9+2n \leq r \leq 11+3n $. From the fact that $ A $ does not contain $ 9+2n $ points in a conic curve, we know that also $ A $ cannot be contained in any conic curves. 
Thus, we know from the definition of the Hilbert function and Lemma \ref{lemma:mix} that $ Dh_Z(2) \geq Dh_A(2) = 3$. 

Moreover, we know that $ Dh_A(4+n)<3 $. In fact, if $ Dh_A(4+n) \geq 3 $ then we have that:$$ \ell(A) = \sum_{j=0}^{\infty}Dh_A(j) \geq \sum_{j=0}^{4+n}Dh_A(j) \geq 3(n+4)>\ell(A),$$ a contradiction.

If $ Dh_A(4+n)=0 $ then $CB(d)$ cannot hold for $ Z $. 
In fact, suppose by contradiction that $CB(d)$ holds for $ Z $.
Then, by Theorem \ref{GKRext} we have: 
$$ \ell(A) = \sum_{j=0}^{3+n} Dh_A(j) \leq \sum_{j=0}^{3+n} Dh_Z(j) \leq \sum_{j=6+2n}^{9+2n} Dh_Z(j) .$$
From $ \ell(A) \geq \ell(B) $ we have $ 2\ell(A) \geq \ell(Z) $. Thus, we can find the following inequality:
\begin{equation}
2\ell(A) \leq \sum_{j=0}^{3+n} Dh_Z(j) + \sum_{j=6+2n}^{9+2n} Dh_Z(j) \leq \ell(Z).
\label{3}
\end{equation} 

So, we have $ 2 \ell(A)=\ell(Z) $ and as a consequence we have that: $$\sum_{j=0}^{3+n} Dh_Z(j) + \sum_{j=6+2n}^{9+2n} Dh_Z(j) = \ell(Z).$$
We can notice that $ Dh_Z(4+n) $ has to be equal to $ 0 $ and this is a contradiction. In fact, from Proposition \ref{nonincr} we have that if $ Dh_Z(4+n)=0 $ then $ Dh_Z(j)=0 $ for all $ j \geq 4+n $.

We can conclude that $ CB(d) $ does not hold for $ Z $.
As before, the second claim follows by Lemma \ref{CBdis} applied to $A,B$ and $T$.

Suppose now $ Dh_A(4+n) = 1 $. As before, from Proposition \ref{nonincr} we know that also $ Dh_A(3+n) \neq 0 $.

If $ Dh_A(3+n)=Dh_A(4+n)=1 $ from Theorem \ref{BGM} we have that $ 5+n$ points of $ A $ are aligned and so, $ A $ does not satisfies the hypothesis of the proposition.

If $ Dh_A(3+n)=2 $ and $ Dh_A(4+n)=1 $ we have two possibilities: by Proposition \ref{nonincr} $ Dh_A(5+n) $ can be equal either to $ 1 $ or $ 0 $ . 

If $Dh_A(5+n)=1$ , from Theorem \ref{BGM} we have that $ 6+n $ points of $ A $ are contained in a line and so, $ A $ does not satisfies the hypothesis of the proposition. 

If $Dh_A(5+n)=0$ then $CB(d)$ cannot hold for $ Z $. 

In fact, suppose by contradiction that $ CB(d) $ holds for $ Z $.
From the fact that $ A $ is contained in no conic curve, from $ Dh_A(4+n)=1 $ and Proposition \ref{nonincr} we have:\begin{multline*}
	\ell(A)=\sum_{j=0}^{4+n} Dh_A(j) = Dh_A(0)+Dh_A(1)+ Dh_A(2)+\sum_{j=3}^{4+n} Dh_A(j) \geq \\ \geq 1+2+3+2+n=8+n.
\end{multline*} 
Thus by Theorem \ref{GKRext} we have that $$ \ell(A)= \sum_{j=0}^{4+n}Dh_A(j) \leq \sum_{j=0}^{4+n}Dh_Z(j) \leq \sum_{j=5+n}^{9+2n} Dh_Z(j) \leq \ell(Z) .$$

 So, we have that $\sum_{j=5+n}^{9+2n} Dh_Z(j)\geq 8+n$ and as a consequence we have that $ Dh_Z(5+n) >1 $ otherwise we would have by Proposition \ref{nonincr} that $\sum_{j=5+n}^{9+2n}Dh_Z(j)=5+n $ .
 
 Moreover, from the fact that $ \ell(A)\geq \ell(B) $ we have that $ \ell(Z)\leq 2 \ell(A) $ and as a consequence:
 $$ 2\ell(A)\leq \sum_{j=0}^{4+n}Dh_Z(j) +\sum_{j=5+n}^{9+2n}Dh_Z(j)= \sum_{j=0}^{9+2n}Dh_Z(j) \leq \ell(Z).$$
 In particular we have $ 2 \ell(A)=\ell(Z) $ and $ Dh_A(4+n)=Dh_Z(4+n)=1 $. This is a contradiction. In fact, from Proposition \ref{nonincr} we cannot have $ Dh_Z(4+n)=1 $ and $ Dh_Z(5+n)>1 $. So $CB(d)$ cannot hold for $ Z $.

As before, the second claim follows by Lemma \ref{CBdis} applied to $A,B$ and $T$.

If $ Dh_A(4+n)=2 $ we can have either $ Dh_A(3+n)=2 $ or $ Dh_A(3+n)=3 $.
If $ Dh_A(3+n)=2 $ then from Theorem \ref{BGM} we have that $ 9+2n $ points of $ A $ are contained in a conic curve, so the hypothesis of the proposition are not satisfied by $ A $. 

If $ Dh_A(3+n)=3 $ then $CB(d) $ cannot hold for $ Z $.
In fact, suppose by contradiction that $CB(d)$ hold for $ Z $.
As before, from the fact that $ Dh_A(4+n)=2 $, from $ Dh_A(3+n)=3 $ and Proposition \ref{nonincr} we have that $ \ell(A)\geq 11+3n $ so, by hypothesis, we have that $ \ell(A)= 11+3n $.
Moreover, we know from Proposition \ref{nonincr} and Lemma \ref{lemma:mix} that $ Dh_Z(3) \geq Dh_A(3) \geq 3 $. 

Thus, by Theorem \ref{GKRext} we have:
\begin{equation}9 \leq \sum_{j=0}^{3}Dh_Z(j) \leq \sum_{j=2n+6}^{d+1}Dh_Z(j).\label{proof:4} \end{equation}
We know that $Dh_Z(3)\geq Dh_A(3)=3$ and since {\small $\sum_{j=0}^{3}Dh_Z(j)\geq \sum_{j=0}^{3}Dh_A(j)=9$ }
we get $ \sum_{j=2n+6}^{d+1} Dh_Z \geq 9$.
Furthermore, from the fact that $Dh_Z$ is not increasing we have $Dh_Z(2n+6)\geq 3$ (otherwise we would have $\sum_{j=2n+6}^{d+1} Dh_Z \leq 8$, by Proposition \ref{nonincr}).

Using the inequality (\ref{proof:4}), we have
\begin{multline*} 6n+22\geq \ell(Z)\geq \sum_{j=0}^{2n+9}Dh_Z(j) = \\ \sum_{j=0}^{3}Dh_Z(j)+ \sum_{j=4}^{2n+5}Dh_Z(j)+\sum_{j=2n+6}^{2n+9}Dh_Z(j)
\\ \geq 9 + \sum_{j=4}^{2n+5}Dh_Z(j) + 9,\end{multline*}
thus $\sum_{j=4}^{2n+5}Dh_Z(j) \leq 6n +4$. Then, by Proposition \ref{nonincr}, since $Dh_Z(3)\geq 3$ and $Dh_Z(2n+6)\geq 3$, 
we have $\sum_{j=4}^{2n+5}Dh_Z(j) \geq 3(2n+2)$ for some $i$. Thus:
$$ 6n + 4 \geq \sum_{j=4}^{2n+5}Dh_Z(j) \geq 6n+6,$$
a contradiction.

As before, the second claim follows by Lemma \ref{CBdis} applied to $A,B$ and $T$.

\hfil\qed

\medskip
Next, we analyse the behavior of a form $ T $ for which Proposition \ref{claim:2} does not hold. In particular, given a decomposition $ A= \{ P_1,P_2, \dots,P_r\} $ of $ T $, we analyse the following cases:
\begin{itemize}
	\item[1)] There is a subset $ A' $ of $ A $ such that $ A' $ is aligned and $ \ell(A')\geq 5+n $.
	\item[2)] There is a subset $ A' $ of $ A $ such that $ \ell(A')\geq 9+2n $ and $ A' $ is contained in a conic curve.
	
\end{itemize}

In case 1 we are going to prove that $T$ has always an infinite family of decompositions. To do that, we recall the following result.

\begin{proposition}
	\label{proposition:qudis}
	Assume that a decomposition $A \subset \Pj^m$ (not necessarily minimal) of length $\ell(A)=r$ of a form $T$ in $ m+1 $ variables and degree $ d $ is contained in a projective curve $C \in \Pj^m $ 
	which is mapped by $\nu_d$ to a space $\Pj^k$, with $k < 2r-1$. Then there exists positive dimensional family 
	of different decompositions $ \{ A_t \}$ of $T$, such that $A_0 = A$.
\end{proposition}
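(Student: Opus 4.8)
\emph{Proof proposal.} Write $C' := \nu_d(C)$; since $\nu_d$ is an embedding, $C'$ is an irreducible curve (we treat $C$ irreducible, as happens in all our applications) which is non-degenerate in its linear span $\Pj^k := \langle C'\rangle$, with $k < 2r-1$, and $T \in \langle\nu_d(A)\rangle\subseteq\Pj^k$. We may assume $A$ is non-redundant (the standing assumption of this section), so that the $r$ points $\nu_d(A)$ are linearly independent; in particular $k\geq r-1$. The plan is to realize $A$ as a point lying in a positive-dimensional fibre of the ``abstract $r$-secant map'' of $C'$, and to read off the family from that fibre.

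The geometric input I would invoke is the classical non-defectivity of secant varieties of curves: for an irreducible non-degenerate curve $C'\subseteq\Pj^k$, the $r$-th secant variety
$$\sigma_r(C') \;:=\; \overline{\bigcup_{Q_1,\dots,Q_r\in C}\langle\nu_d(Q_1),\dots,\nu_d(Q_r)\rangle}$$
has dimension $\min\{2r-1,\,k\}$. (In our applications $C'$ is a rational normal curve or an elliptic normal curve, where this is entirely standard.) Since $k<2r-1$ this forces $\sigma_r(C')=\Pj^k$.

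Next I would introduce the incidence variety
$$\mathcal{I} := \overline{\bigl\{(Q_1,\dots,Q_r,x)\in C^r\times\Pj^k \;:\; \nu_d(Q_1),\dots,\nu_d(Q_r)\ \text{independent},\ x\in\langle\nu_d(Q_1),\dots,\nu_d(Q_r)\rangle\bigr\}}.$$
Over the dense open locus in $C^r$ of tuples with independent images (nonempty since $k\geq r-1$) the projection to $C^r$ is a $\Pj^{r-1}$-bundle, so $\mathcal{I}$ is irreducible of dimension $r+(r-1)=2r-1$; and the projection $p\colon\mathcal{I}\to\Pj^k$ is dominant, its image being $\sigma_r(C')=\Pj^k$. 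By the theorem on fibre dimension for a dominant morphism of irreducible varieties, every irreducible component of every fibre of $p$ has dimension at least $\dim\mathcal{I}-k=2r-1-k>0$. Since the $\nu_d(P_i)$ are independent, $x_0:=(P_1,\dots,P_r,T)$ belongs to $\mathcal{I}$ and to $p^{-1}(T)$; let $Z$ be an irreducible component of $p^{-1}(T)$ through $x_0$, so $\dim Z>0$. Forgetting the last coordinate identifies $Z$ with an irreducible subvariety $Z'\subseteq C^r$ of positive dimension through $(P_1,\dots,P_r)$; on the dense open subset of $Z'$ where the $\nu_d$-images stay independent — dense because $(P_1,\dots,P_r)$ lies there — one has $T\in\langle\nu_d(Q_1),\dots,\nu_d(Q_r)\rangle$ with the $Q_i$ automatically distinct, i.e.\ $\{Q_1,\dots,Q_r\}$ is a genuine length-$r$ decomposition of $T$. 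Since $Z'$ is positive-dimensional it is not contained in the finite set of permutations of $(P_1,\dots,P_r)$; choosing an irreducible curve $\Gamma\subseteq Z'$ through $(P_1,\dots,P_r)$ and letting $A_t$ be the set underlying the tuple parametrized by $t\in\Gamma$ yields the desired positive-dimensional family $\{A_t\}$ of decompositions of $T$, with $A_0=A$ and with infinitely many members distinct from $A$.

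I expect the main obstacle to be the classical input of the second step — the non-defectivity of $\sigma_r$ for an arbitrary irreducible non-degenerate curve — after which the argument is essentially bookkeeping; the delicate points are that the fibre-dimension bound must be applied to the component of $p^{-1}(T)$ through $x_0$ (so that the family passes through $A$ itself, not merely through a general decomposition), and that a general member of the family must be checked to be an honest set of $r$ distinct points. The cases in which $C$ is reducible, or $A$ is redundant, need minor additional care but do not arise in the applications of this note.
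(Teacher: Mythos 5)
Your proof is correct in substance, but it takes a genuinely different route from the paper: the paper gives no argument at all for Proposition \ref{proposition:qudis}, delegating it entirely to Lemma 34 of \cite{bergimid}, whereas you prove it directly. In effect you run, with the curve $C$ in place of $\Pj^m$, the same abstract-secant-variety mechanism that the paper only introduces later in Section 4 (the variety $A\Sigma_r$ and Proposition \ref{nofam}): your incidence variety $\mathcal{I}$ over $C^r$ has dimension $2r-1$, its image in $\Pj^k$ has dimension at most $k<2r-1$, so the fibre of $p$ over $T$ through the point $(P_1,\dots,P_r,T)$ is positive dimensional, and its projection to $C^r$ yields the family through $A$; the bookkeeping about independence of the $\nu_d$-images, distinctness of the points, and the finite map to the symmetric product is handled correctly. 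Three remarks. First, the classical non-defectivity of secant varieties of curves, which you single out as the main external input, is actually superfluous: apply the fibre-dimension theorem to $p\colon\mathcal{I}\to\overline{p(\mathcal{I})}$ and use only $\dim\overline{p(\mathcal{I})}\leq k$, which already gives every component of $p^{-1}(T)$ dimension at least $2r-1-k>0$ without knowing that $\sigma_r(C')$ fills $\Pj^k$. Second, your parenthetical that the reducible case ``does not arise in the applications'' is not accurate: in Lemma \ref{lemma:3} the conic through the $9+2n$ points may well be a pair of lines (the paper itself glosses over this). The fix is immediate, though: replace $C^r$ by the product $D_1\times\dots\times D_r$ of irreducible components of $C$ with $P_i\in D_i$, which is irreducible of dimension $r$; the open locus of tuples with independent $\nu_d$-images is nonempty because it contains $(P_1,\dots,P_r)$ itself (non-redundancy), and your argument then runs verbatim. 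Third, the redundant case is in fact trivial rather than delicate: if $T$ lies in the span of $v_d(A'')$ for a proper subset $A''\subset A$, one obtains a positive-dimensional family simply by moving a point of $A\setminus A''$ along $C$. So your restrictions cost nothing, and what your route buys is a self-contained proof in place of the paper's external citation, at the price of an irreducibility hypothesis that should be (easily) removed to cover the paper's own use of the statement.
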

\begin{proof} A proof of this Lemma can be found in Lemma 34 of \cite{bergimid}. This result is true also in higher dimensional projective spaces but we will use it only for plane curves.
	
\end{proof}

As a consequence, we can prove the following lemma.

\begin{lemma}
	\label{lemma:2}
	Fix $ n \in \N $. Given form $T$ in 3 variables of rank $ r \leq 11+3n $ and degree $ d=8+2n$, and given a decomposition $A= \{ P_1,P_2, \dots , P_{r} \}$ of $T$ , such that there exists a subset $ A' $ of $ A $ with $ \ell(A')=r'\geq 5+n $ and $A'$ is aligned, then there exists a positive dimensional family $ \{ A_t \}$ of decompositions of $T$ such that $A_0=A$.
\end{lemma}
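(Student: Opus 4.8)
The plan is to localise the failure of uniqueness on the line carrying the large aligned subset and then quote Proposition~\ref{proposition:qudis}. Write $A = A' \cup A''$, where $A' \subset A$ is the aligned subset, lying on a line $L \subset \Pj^2$, with $\ell(A') = r' \geq 5+n$, and $A'' = A \setminus A'$ is the (possibly empty) complement. Fixing homogeneous representatives and the scalars $a_i$ of the given decomposition $T = \sum_{i=1}^r a_i v_d(P_i)$, set
$$T' = \sum_{P_i \in A'} a_i v_d(P_i), \qquad T'' = \sum_{P_i \in A''} a_i v_d(P_i),$$
so that $T = T' + T''$ in $\Pj(Sym^d(\C^3))$, while $A'$ is a decomposition of $T'$ and $A''$ is a decomposition of $T''$ (both by definition of decomposition). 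Since $A$ is minimal of length $r$ and $A' \neq \emptyset$, we have $T' \neq 0$: otherwise $T = T''$ would be decomposed by the proper subset $A''$, contradicting minimality.

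Next I would apply Proposition~\ref{proposition:qudis} to the form $T'$ with its decomposition $A'$, taking as the curve $C$ the line $L$ itself. The image $v_d(L)$ is the rational normal curve of degree $d$, which spans a linear subspace $\Pj^k$ of $\Pj^N$ with $k = d = 8+2n$. The numerical hypothesis $k < 2r'-1$ of Proposition~\ref{proposition:qudis} then reads $8+2n < 2r'-1$, i.e. $r' > \frac{9+2n}{2} = \frac{d+1}{2}$; and this is exactly what $r' \geq 5+n$ gives, since $5+n$ is the least integer exceeding $\frac{d+1}{2}$. Hence Proposition~\ref{proposition:qudis} yields a positive-dimensional family $\{A'_t\}$ of pairwise distinct decompositions of $T'$ with $A'_0 = A'$.

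Finally I would glue the families: set $A_t := A'_t \cup A''$. Since $T' \in \langle v_d(A'_t)\rangle$ and $T'' \in \langle v_d(A'')\rangle$, we get $T = T' + T'' \in \langle v_d(A'_t \cup A'')\rangle = \langle v_d(A_t)\rangle$, so every $A_t$ is a decomposition of $T$, and $A_0 = A' \cup A'' = A$. For general $t$ the moving points of $A'_t$ are disjoint from the fixed finite set $A''$, so $t \mapsto A_t$ is generically injective and $\{A_t\}$ is still positive-dimensional, as required. (When $A'' = \emptyset$, i.e. all of $A$ is aligned, this degenerates to a direct application of Proposition~\ref{proposition:qudis} to $A$ on $L$.)

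There is no serious obstacle here; the one point to get right is the inequality $k = d < 2r'-1$, on which everything hinges, and this is precisely the content of the hypothesis $\ell(A') \geq 5+n$ — the threshold $5+n$ being the smallest length for which $v_d(L)$ is "deficient" for $A'$ in the sense of Proposition~\ref{proposition:qudis}. The remaining work is the bookkeeping of the splitting $T = T'+T''$ and the check that the glued family does not collapse to a point.
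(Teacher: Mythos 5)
Your proof is correct and follows essentially the same route as the paper: split $T$ as the part $T'$ supported on the aligned subset $A'$ plus the rest, apply Proposition~\ref{proposition:qudis} to $A'\subset L$ using that $v_d(L)$ spans a $\Pj^d$ with $d=8+2n<2r'-1$ (guaranteed by $r'\geq 5+n$), and then reattach the fixed terms on $A''$ to obtain the family $A_t$. Your additional checks that $T'\neq 0$ and that the glued family does not collapse are small refinements the paper leaves implicit, but the argument is the same.
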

\begin{proof} We may assume that $ A' = \{ P_1, P_2, \dots ,P_{5+n}\} \subset L$ such that $ L $ is a line.
	If $ T= a_1 v_d(P_1)+a_2v_d(P_2)+\dots +a_r v_d(P_r) $ we define $ T_0 $ as follows: $$ T_0=a_1 v_d(P_1)+a_2v_d(P_2)+\dots +a_{5+n} v_d(P_{5+n}).$$
The image of $L$ through $v_d$ is the composition of $v_1$ and $v_d$ applied to $\Pj^1$. Thus, $v_d(C)$ is embedded in a $\Pj^ {d}$. 
	
	Moreover, we have that the inequality
	$$d=8+2n \leq 2r'-1$$ holds for all $r'$ such that $5+n \leq r' \leq 11+3n$. In fact:
	$$2r'-1 \geq 2(5+n)-1 > 8+2n.$$
	Thus, from Proposition \ref{proposition:qudis} $ T_0 $ has an infinite family of decompositions. If we add $ a_{6+n}v_d(P_{6+n})+\dots +a_{r} v_d(P_r) $ to all the decompositions $A'_t$ of $ T_0 $ we find an infinite family of decompositions $A_t$ for $T$.

\end{proof}

Case 2 is similar to case 1. When there are at least $ 9+2n $ points of $A$ contained in a conic we can find the existence of an infinite family of decompositions for $T$. 

\begin{lemma}
	\label{lemma:3}
	Fix $ n \in \N $. Given a form $T$ in 3 variables of rank $ r \leq 11+3n $, and degree $d = 8+2n $ and given a decomposition $A= \{ P_1,P_2, \dots , P_{r} \}$ of $T$, such that there exists a subset $ A' $ of $ A $ with $ \ell(A')=r'\geq 9+2n $ and $A'$ is contained in a conic curve, then there exists a positive dimensional family $ \{ A_t \}$ of decompositions of $T$ such that $A_0=A$.
\end{lemma}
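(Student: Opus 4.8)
The plan is to mimic the proof of Lemma \ref{lemma:2}, replacing the line by a conic. Concretely, after relabeling, assume $A' = \{P_1, \dots, P_{9+2n}\} \subset C$ where $C$ is a plane conic, and split the decomposition $T = \sum_{i=1}^r a_i v_d(P_i)$ into $T_0 = \sum_{i=1}^{9+2n} a_i v_d(P_i)$ plus the remaining tail $\sum_{i=10+2n}^r a_i v_d(P_i)$. The set $A'$ is a (possibly redundant) decomposition of $T_0$ of length $r' = \ell(A') \geq 9+2n$, and it is contained in the curve $C$.

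The key computation is to check the hypothesis of Proposition \ref{proposition:qudis} for $T_0$ and $C$: I need $k < 2r' - 1$, where $\Pj^k$ is the span of $v_d(C)$. A smooth plane conic is isomorphic to $\Pj^1$ embedded by $v_2$, so $v_d$ restricted to $C$ is the degree-$2d$ Veronese embedding of $\Pj^1$; hence $v_d(C)$ spans a $\Pj^{2d}$, i.e. $k = 2d = 16+4n$. (If $C$ is a degenerate conic — a pair of lines, or a double line — the span is no larger, so this is the worst case; alternatively, one can simply note that $\langle v_d(C)\rangle$ has dimension at most $2d$ in every case, which is all that is needed.) Then the required inequality reads $16 + 4n < 2r' - 1$, i.e. $r' \geq 9 + 2n$, which is exactly the hypothesis: indeed $2r' - 1 \geq 2(9+2n) - 1 = 17 + 4n > 16 + 4n$. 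So Proposition \ref{proposition:qudis} applies and yields a positive-dimensional family $\{A'_t\}$ of decompositions of $T_0$ with $A'_0 = A'$.

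Finally, I reconstruct a family for $T$ exactly as in Lemma \ref{lemma:2}: for each $t$, set $A_t = A'_t \cup \{P_{10+2n}, \dots, P_r\}$, which is a decomposition of $T_0 + \sum_{i=10+2n}^r a_i v_d(P_i) = T$, and $A_0 = A$. This is the desired positive-dimensional family.

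I do not expect a serious obstacle here; the only point requiring a moment's care is the degenerate-conic case, handled by the observation that the span of $v_d(C)$ can only shrink when $C$ degenerates, so the bound $k \leq 2d$ is uniform. Everything else is a verbatim transcription of the argument for the aligned case with $d \leq 2r'-1$ replaced by $2d \leq 2r' - 2$, which holds throughout the range $9+2n \leq r' \leq 11+3n$.

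\begin{proof}
	We may assume that $A' = \{P_1, P_2, \dots, P_{9+2n}\} \subset C$, where $C$ is a plane conic. Write
	$$T = a_1 v_d(P_1) + a_2 v_d(P_2) + \dots + a_r v_d(P_r)$$
	and set
	$$T_0 = a_1 v_d(P_1) + a_2 v_d(P_2) + \dots + a_{9+2n} v_d(P_{9+2n}),$$
	so that $A'$ is a decomposition of $T_0$ of length $r' = \ell(A') \geq 9+2n$ contained in $C$.

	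The restriction of $v_d$ to a smooth conic $C$ is the composition of the degree-$2$ Veronese embedding $\Pj^1 \cong C \hookrightarrow \Pj^2$ with $v_d$, hence it is the degree-$2d$ Veronese embedding of $\Pj^1$; therefore $v_d(C)$ spans a $\Pj^{2d}$. If $C$ is a degenerate conic the linear span of $v_d(C)$ can only be smaller, so in every case $\langle v_d(C) \rangle \subseteq \Pj^k$ with $k \leq 2d = 16+4n$.

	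Now the inequality
	$$k \leq 16 + 4n < 2r' - 1$$
	holds for all $r'$ with $9+2n \leq r' \leq 11+3n$, since
	$$2r' - 1 \geq 2(9+2n) - 1 = 17 + 4n > 16 + 4n.$$
	Thus, by Proposition \ref{proposition:qudis}, $T_0$ has a positive-dimensional family $\{A'_t\}$ of decompositions with $A'_0 = A'$.

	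For each $t$, set $A_t = A'_t \cup \{P_{10+2n}, \dots, P_r\}$. Adding $a_{10+2n} v_d(P_{10+2n}) + \dots + a_r v_d(P_r)$ to all the decompositions $A'_t$ of $T_0$, we obtain that each $A_t$ is a decomposition of $T$, and $A_0 = A$. This is the required positive-dimensional family.
\end{proof}
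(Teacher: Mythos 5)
Your proof is correct and follows essentially the same route as the paper: split off $T_0$ supported on the $9+2n$ points of the conic, note $v_d(C)$ spans at most a $\Pj^{2d}$ with $2d=16+4n < 2r'-1$, apply Proposition \ref{proposition:qudis}, and add back the remaining terms to get the family for $T$. The only difference is your explicit remark on degenerate conics, a minor refinement the paper leaves implicit.
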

\begin{proof} As before, we may assume that $ A' = \{ P_1, P_2, \dots, P_{9+2n}\} \subset C$ such that $ C $ is a conic curve.

If $ T= a_1 v_d(P_1)+a_2v_d(P_2)+\dots +a_r v_d(P_r) $ we define $ T_0 $ as follows: $$ T_0=a_1 v_d(P_1)+a_2v_d(P_2)+\dots +a_{9+2n} v_d(P_{9+2n}).$$

As before, a conic curve $C$ is a rational normal curve, so it is an image through $v_2$ of $\Pj^1$. Moreover, the image of $C$ through $v_d$ is the composition of $v_2$ and $v_d$ applied to $\Pj^1$. So $v_d(C)$ is embedded in a $\Pj^ {2d}$ and, as before, the inequality $2d=16+4n \leq 2r'-1$ holds for all $r'$ such that $9+2n \leq r' \leq 10+3n$.
In fact:
$$2r'-1 \geq 2(9+2n)-1 > 16+4n. $$
Thus, from Proposition \ref{proposition:qudis}, $ T_0 $ has an infinite family of decompositions. If we add $ a_{10+2n} v_d(P_{10+2n})+\dots +a_r v_d(P_r) $ to all the decompositions $A'_t$ of $ T_0 $ we find an infinite family of decompositions $A_t$ for $T$.

\end{proof}

Now, we are able to describe the behaviour of all forms of degree $r \leq 11+3n$ with a decomposition contained in at least one cubic curve.

\begin{theorem}
\label{proposition:TM}
Fix $ n \in \N $. Given a form $T$ in 3 variables of rank $r= \ell(A) \leq 11+3n$, $ n \in \N $ and degree $ d=8+2n $ such that $A= \{P_1,P_2, \dots ,P_r \}$ is a non-redundant decomposition of $T$, then either $A$ is unique or there is an infinite family $A_t$ of decompositions of length $11+3n$, such that $A_0=A$.
\end{theorem}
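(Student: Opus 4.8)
\emph{Proof proposal.}
The plan is to argue by induction on $r=\ell(A)$, keeping the degree $d=8+2n$ fixed. For the base of the induction, note that if $r\le\frac{d+1}{2}=\frac{9+2n}{2}$, i.e. $r\le 4+n$, then Theorem \ref{theorem:1} shows that $A$ is minimal and that $T$ is identifiable; hence every decomposition of $T$ of length $\le r$ is minimal, thus equal to $A$, and $A$ is unique. From now on assume $5+n\le r\le 11+3n$, assume the statement for non-redundant decompositions of length $<r$ of forms of degree $d$, and — aiming at the dichotomy — suppose that $A$ is \emph{not} the unique non-redundant decomposition of $T$ of length $\le r$. We must then build the family $\{A_t\}$ with $A_0=A$.

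If $A$ contains $5+n$ collinear points, Lemma \ref{lemma:2} already produces such a family; if $A$ contains $9+2n$ points on a conic, Lemma \ref{lemma:3} does. So we may assume $A$ contains neither configuration, which is exactly the hypothesis under which Proposition \ref{claim:2} applies. Fix a non-redundant decomposition $B\ne A$ of $T$ with $\ell(B)\le r$ (for instance a minimal one). By Proposition \ref{claim:2} we have $A\cap B\ne\emptyset$. Pick $P\in A\cap B$ and let $a$, resp. $b$, be the coefficient of $v_d(P)$ in the decomposition $A$, resp. $B$ (both nonzero by non-redundancy). Set $T'=T-a\,v_d(P)$ and $A'=A\setminus\{P\}$; then $A'$ is a non-redundant decomposition of $T'$ (the independence of $v_d(A)$ and the nonvanishing of the remaining coefficients are inherited) of length $r-1\le 11+3n$, so the inductive hypothesis applies to $(T',A')$.

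If $A'$ is \emph{not} the unique decomposition of $T'$ of length $\le r-1$, the inductive hypothesis gives an infinite family $\{A'_t\}$ of decompositions of $T'$ with $A'_0=A'$, and re-adding $P$ (with coefficient $a$) yields the infinite family $\{A'_t\cup\{P\}\}$ of decompositions of $T$ with $A_0=A$; we are done. So it remains to exclude that $A'$ is the unique decomposition of $T'$ of length $\le r-1$. Observe that $B$, with the coefficient of $v_d(P)$ replaced by $b-a$, is also a decomposition of $T'$; call it $\widetilde B$. If $a=b$, then $\widetilde B=B\setminus\{P\}$ has length $\ell(B)-1\le r-1$ and $\widetilde B\ne A'$ (else $A=B$), contradicting uniqueness. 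If $a\ne b$ but $\ell(B)\le r-1$, then $\widetilde B$ has support $B$ and length $\ell(B)\le r-1$, and again $\widetilde B\ne A'$ since $P\in\widetilde B\setminus A'$, a contradiction. The only case left is $a\ne b$ and $\ell(B)=r$, so that $T$ has rank $r$ and two distinct minimal decompositions $A,B$ meeting only at points where the corresponding coefficients disagree; this is the delicate case and, I expect, the main obstacle.

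In that case one runs the reduction symmetrically from $B$ as well: with $T''=T-b\,v_d(P)$ and $B''=B\setminus\{P\}$, the dichotomy above forces both $T'$ and $T''$ to have rank $r-1$ with unique minimal decompositions $A'$ and $B''$, even though $T'=T''+(b-a)\,v_d(P)$ still carries the non-redundant length-$r$ decomposition $B=B''\cup\{P\}$. A contradiction should then be extracted from the Hilbert function of $Z=A\cup B$: Proposition \ref{d+1} gives $h_Z(d)<\ell(Z)=2\,\ell(A)-\ell(A\cap B)$, whereas the absence of $5+n$ collinear and $9+2n$ conic points in $A$ forces $Dh_A$, hence $Dh_Z$ by Lemma \ref{lemma:mix}, to stay $\ge 2$ and then $\ge 3$ on the relevant initial range of degrees, and Proposition \ref{nonincr} pins down enough of the tail of $Dh_Z$ to violate $\sum_j Dh_Z(j)=\ell(Z)$ — the same counting that drives the proof of Proposition \ref{claim:2}, with the inductive peeling of a common point now playing the role that Cayley--Bacharach (Theorem \ref{GKRext} via Lemma \ref{CBdis}) plays there but can no longer play directly once $A\cap B\ne\emptyset$. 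The crux is precisely to propagate the inductive information through the coefficient mismatch while keeping the base point of the resulting family equal to $A$; everything else is the bookkeeping of lengths and coefficients indicated above.
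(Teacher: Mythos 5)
Your reduction $T'=T-a\,v_d(P)$, $A'=A\setminus\{P\}$ leaves a genuine gap exactly where you flag it: when the coefficients of $v_d(P)$ in $A$ and $B$ differ and $\ell(B)=r$, the modified decomposition $\widetilde B$ still has length $r$, so the inductive uniqueness of $A'$ among decompositions of length $\leq r-1$ says nothing about it. Moreover the contradiction you hope to ``extract from the Hilbert function of $Z=A\cup B$'' cannot be obtained by rerunning the counting of Proposition \ref{claim:2}: that counting is powered by Theorem \ref{GKRext}, which requires $CB(d)$ for $Z$, and $CB(d)$ is only guaranteed by Lemma \ref{CBdis} for two \emph{disjoint} non-redundant decompositions --- precisely what fails once $A\cap B\neq\emptyset$. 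Proposition \ref{d+1} alone gives $Dh_Z(d+1)>0$, which is far too weak to force the bounds $Dh_Z\geq 2$ or $\geq 3$ on the tail that the argument of Proposition \ref{claim:2} needs. So the ``delicate case'' is not bookkeeping; it is the heart of the theorem, and your sketch does not close it.

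The paper closes it with a different device, which you could adopt without any induction on $r$: instead of peeling one shared point with its $A$-coefficient, subtract the \emph{whole} common part with the $B$-coefficients. Writing $A\cap B=\{P_1,\dots,P_i\}$, set $T_0=(a_1-b_1)v_d(P_1)+\dots+(a_i-b_i)v_d(P_i)+a_{i+1}v_d(P_{i+1})+\dots+a_rv_d(P_r)$, which also equals $b_{i+1}v_d(P'_{i+1})+\dots+b_kv_d(P'_k)$. Then $T_0$ carries the two \emph{disjoint} decompositions $A$ and $B'=B\setminus A$; if both are non-redundant, Lemma \ref{CBdis} applied to $T_0$ gives $CB(d)$ for $A\cup B'=Z$, contradicting Proposition \ref{claim:2}. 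The residual cases are a short analysis of redundancy: redundancy of $B'$, or of $A$ at a point outside $A\cap B$, contradicts the non-redundancy of $B$, resp. $A$, as decompositions of $T$ (add back $b_1v_d(P_1)+\dots+b_iv_d(P_i)$); redundancy of $A$ at a common point forces $a_j=b_j$ there (by linear independence of $v_d(A)$), so one passes to a proper subset $A'\subsetneq A$ that, together with $B'$, gives two disjoint non-redundant decompositions of $T_0$, with $A'$ still containing no $5+n$ aligned points nor $9+2n$ points on a conic (being a subset of $A$), and Proposition \ref{claim:2} applies again. Your handling of the base range via Theorem \ref{theorem:1} and of the aligned/conic configurations via Lemmas \ref{lemma:2} and \ref{lemma:3} matches the paper; the missing ingredient is this ``mixed-coefficient'' disjointification.
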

\begin{proof} If $r \leq 11+ 3n$, we know from Proposition \ref{claim:2}, Lemma \ref{lemma:2} and Lemma \ref{lemma:3} that the existence of two \emph{disjoint} decompositions implies the existence of an infinite family of decompositions for $ T $.

Suppose now that $r \leq 11+ 3n$, $T$ has not infinitely many decompositions, so $ A $ does not contain $5+n$ points aligned or $9+2n$ points in a conic curve, and suppose that $B$ is another decomposition for $T$ of cardinality $\ell(B)=k \leq r$. 
Of course we may assume that $B$ is non-redundant.
If $A \cap B= \emptyset$ we have a contradiction from Proposition \ref{claim:2}. So, assume $A\cap B \neq \emptyset$. 

Thus we can write, without loss of generality, $B=\{ P_1,\dots P_i,P'_{i+1},\dots,P'_k\}$ i.e.
we may assume that $A\cap B=\{ P_1,\dots, P_i\}$, $i>0$.
Then there are coefficients $a_1,\dots,a_{r},b_1,\dots,b_k$ such that:
\begin{multline*}T=a_1v_d( P_1)+\dots+a_iv_d(P_i)+a_{i+1}v_d(P_{i+1})\dots+a_{r} v_d(P_{r}) = \\
=b_1 v_d(P_1)+\dots+b_i v_d(P_i)+ b_{i+1}v_d(P'_{i+1})+\dots+b_k v_d(P'_k).\end{multline*}
Consider the form 
$$T_0 = (a_1-b_1)v_d( P_1)+\dots+(a_i-b_i)v_d( P_i)+a_{i+1}v_d(P_{i+1})+\dots+a_r v_d(P_{r}),$$
which is also equal to $ b_{i+1}v_d(P'_{i+1})+\dots+b_k v_d(P'_k)$. Thus $T_0$ has two decompositions $A$ and
$B'=\{P'_{i+1},\dots,P'_k\}$, which are disjoint. Thus, if $A$ and $B'$ are both non-redundant, then by Lemma \ref{CBdis} 
applied to $A,B'$ and $T_0$, we get that $A\cup B'$ satisfies $CB(d)$.
Since $A\cup B'=A\cup B=Z$, and we know by Proposition \ref{claim:2} that $Z$ does not satisfies $CB(d)$,
we find that either $A$ or $B'$ are redundant. 

Assume that $B'$ is not non-redundant. Then we can find a point of $B'$, say $P'_{k-1}$, such that
$T_0$ belongs to the span of $v_d(B'\setminus\{P'_k\})$. Since $T=T_0+b_1v_d(P_1)+\dots+b_iv_d(P_i)$,
this would mean that $T$ belongs to the span of $v_d(B'\setminus\{P'_k\})$, which contradicts the fact that $B$ is non-redundant.

Assume that $A$ is not non-redundant, and $T_0$ belongs to the span of $v_d(A\setminus\{P_j\})$,
for some $j>i$. As above, since $T=T_0+b_1v_d(P_1)+\dots+b_r v_d(P_r)$,
this would mean that $T$ belongs to the span of $v_d(A\setminus\{P_j\})$, which contradicts the fact that $A$ is non-redundant.

Assume that $A$ is not non-redundant, and $T_0$ belongs to the span of $v_d(A\setminus\{P_j\})$,
for some $j\leq i$, say $j=1$. Then $T_0=\gamma_2 v_d(P_2)+\dots+\gamma_{r}v_d(P_{r})$,
for some choice of the coefficients $\gamma_j$. Since $v_d(A)$ is linearly independent, because $A$ is non-redundant,
this is only possible if $a_1-b_1=0$. So there exists a proper subset $A'\subset A$ which provides a non-redundant decomposition of $T_0$, together with $B'$. Moreover $\ell(A')\leq 10+3n$ and $ A' \cap B' =\emptyset$.

From the fact that $ A' \subset A $, we also know that $ A' $ does not contain $ 5+n $ points aligned or $ 9+2n $ points in a conic curve.

If there are not $ n+5 $ points of $ A' $ aligned or $ 2n+9 $ points of $ A' $ contained in a conic, all the hypothesis of Proposition \ref{claim:2} are satisfied by $ T_0 $, so we have that $T_0$ cannot have two disjoint decompositions. So the existence of $ B' $ yields a contradiction.

 We conclude that $ T $ is identifiable.

\end{proof}

This results cannot be extended to higher value of $ r $. In fact, we can find an example of a form $ T $ of rank $ 12 $ in degree $ 8 $ that is not identifiable and for which there are exactly two decompositions. This follows from a well known results proved by Ciliberto and Chiantini in \cite{CCi06}.

\begin{example}
Take $ T $ a form in 3 variables and $ A=\{P_1,P_2,\dots ,P_{12} \} $ a decomposition of $ T $ in degree $ 8 $ contained in an unique irreducible, smooth plane cubic curve $ C $. This case is outside our numerical bound for the length of the decomposition. We claim that $T$ has two different decompositions (so that our range is sharp). The proof is the same of Theorem 5.1 of \cite{CCi06} and it is a direct consequence of Theorem 2.4, Theorem 2.10 and Proposition 5.2 of \cite{CCi06}.

From our point of view, we can prove the claim as follows.
From the fact that $ A $ is contained in a unique irreducible, smooth cubic curve $ C $, we know from Theorem \ref{GKRext} and from Theorem \ref{BGM} that all the other decompositions $B$ of $T$ lie in $C$ and, moreover, the function $Dh_Z$ of $Z=A \cup B$ is symmetric around degree $5$, i.e. it is:
\smallskip
\begin{center}\begin{tabular}{c|ccccccccccc}
		$i$ & 0 & 1 & 2 & 3 & 4 & 5 & 6 & 7 & 8 & 9 & 10 \dots \\ \hline
		$Dh_Z(i)$ & 1 & 2 & 3 & 3 & 3 & 3 & 3 & 3 & 2 & 1 & 0 \dots
	\end{tabular} 
\end{center}
This implies that $ Z $ is a complete intersection of the cubic $ C $ with a curve $ C' $ of degree $ 8 $ (see the main theorem of \cite{Dav84}). Moreover, we obtain that $\ell(B)$ is $12$. Furthermore, the intersection of the span of $v_8(A)$ with the span of $v_8(B)$ is only $T$, because $\sum_{i=9}^\infty (Dh_Z(i))=1$ (see section 6 of \cite{otto}). One computes that the sets of $12$ points $B$ in the plane which, together with $A$, are a complete intersection of type $3,8$, are parametrized by a projective space of dimension $11$. This space maps birationally to the span of $P_1,P_2, \dots,P_{12}$ (this can be obtained by a direct computation on one specific point $T$, see e.g. \cite{AC19} Claim 4.4). Thus a general $T$ in the span of $P_1,\dots ,P_{12}$ has two decompositions. 
\end{example}
Notice that, numerically, if we know that $A \cap B = \emptyset$ and $A$ is not contained in a cubic curve, then we can conclude the identifiability. Thus, In order to repeat the proof of Theorem \ref{proposition:TM} for $\ell(A)=3n+12$ we must have that no subset of 10 points of $A$ sits in a cubic curve. To control that, we need to compute $k_3$, which we would like to avoid, to maintain a cheap computational cost.

\section{Excluding the existence of a family of decompositions}
\label{Excluding the existence of a family of decompositions}
The main difference between Theorem \ref{proposition:TM} and Proposition \ref{proposition:1} is that in order to check the hypothesis of 
Theorem \ref{proposition:TM} we do not need to compute the Kruskal's ranks $k_2$ and $k_{n+3}$, but only to determine the non existence 
of a family of decompositions ${A_t}$ for $T$.
This can be done by means of the \emph{Terracini's test} on $A$.

We will need a series of definitions related to the secant varieties of a Veronese embedding.
We refer to section 5 of \cite{dodici} for the proofs of the claims below.\\
Denote with $\Sigma_r$ the closure of the subset of forms of rank $r$ in \begin{small}
$\Pj(Sym^d(\C^{m+1}))$
\end{small} 
and denote with $(\Pj^m)^{(r)}$ the symmetric product.
 
We define the \emph{abstract secant variety}, and the \emph{$r-$secant map}, as follows.

\begin{definition} We define the abstract secant variety $A\Sigma_r$ as the subvariety of $\Pj(Sym^d(\C^{m+1})) \times (\Pj^m)^{(r)}$ which is the Zariski closure of the set of pairs \begin{small}$(T,[P_1, \dots, P_r ])$ \end{small} such that the set $\{v_d(P_1), \dots , v_d(P_r) \}$ spans a subspace of dimension $r - 1$ 
in $\Pj(Sym^d(\C^{m+1}))$ and $T$ belongs to the span of $v_d(A)$.

We define the $r-$th secant map $s_r$ as the first projection
$$ s_r : A \Sigma_r \rightarrow \Pj(Sym^d(\C^{m+1})).$$
\end{definition}

We note that the image of the secant map is $\Sigma_r$ and that the inverse image of a form $T$ in the secant map is the set of decompositions 
of cardinality $r$ of $T$.
Furthermore, since $\Pj^m$ is a smooth variety, then $(\Pj^m)^{(r)}$ is smooth outside the diagonals.
Thus, if $U$ is the open set of $(\Pj^m)^{(r)}$ of sets $[P_1,\dots, P_r]$ such that $\{v_d(P_1), \dots , v_d(P_r) \}$ is linearly independent,
then $A\Sigma_r$ is a $\Pj^{r-1}$ bundle over $U$, thus $s_r^{-1}(U)$ is smooth, of dimension $(r-1+rm)$ ($=\dim A\Sigma_r$).

We can now define the Terracini's space $\tau$.

\begin{definition} We call the Terracini's space of a decomposition \begin{small} $A = \{ P_1,\dots ,P_r \}$ \end{small}of $T$ the image of the tangent space to $A\Sigma_r$ 
at the point $(T, [\{ P_1,\dots ,P_r])$ in the differential of $s_r$. Thus $\tau$ is a linear subspace of $\Pj^N = \Pj(Sym^d(\C^{m+1}))$.
 It is the linear space spanned by the tangent spaces to $v_d(\Pj^m)$ at the points $v_d(P_1),\dots ,v_d(P_r)$. 
\end{definition} 

The Terracini's Lemma (see \cite{diciannove}) says that for a general choice of $T\in \Sigma_r$ and for $r\leq N$
the Terracini's space is the tangent space to $\Sigma_r$ at $T$.

\begin{remark} The dimension of the Terracini's space $\tau$ is naturally bounded:
$$dim(\tau ) \leq (m + 1)r - 1$$
and the equality means that the tangent spaces to $v_d(\Pj^m)$ at the points $v_d(P_i)$'s are linearly independent.
By \cite{ventidue}, as soon as $d>2$ and $r>5$ we know that for a \emph{general} choice of the set $A$, 
the dimension of the Terracini's space equals the expected dimension.
\end{remark}

The main link with our problem is given by the following observation.

\begin{proposition}\label{nofam} Let $A \subset \Pj^m$ be a non-redundant decomposition of a form $T$ in $ m+1 $ variables such that $ \ell(A)=r$. Assume also that there exists a non trivial family $A_t$ of decompositions of $T$, such that $A_0$ = $A$. 
Then the Terracini's space $\tau$ of $A$ has dimension strictly smaller than $(m + 1)r - 1$.
\end{proposition}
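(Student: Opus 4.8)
The plan is to show that if $A$ sits in a nontrivial family of decompositions $\{A_t\}$ of the fixed form $T$, then the differential of the secant map $s_r$ at the point $(T,[A])\in A\Sigma_r$ fails to be injective, which forces its image — the Terracini space $\tau$ — to have dimension strictly less than the generic value $(m+1)r-1 = \dim A\Sigma_r$. First I would observe that, since $A$ is non-redundant, $v_d(A)$ is linearly independent, so $[A]$ lies in the open set $U\subset(\Pj^m)^{(r)}$ over which $A\Sigma_r$ is a $\Pj^{r-1}$-bundle and hence smooth of dimension $(r-1)+rm = (m+1)r-1$ at $(T,[A])$. Thus the differential $ds_r$ at this point is a linear map from a vector space of dimension $(m+1)r-1$ to $T_{[N]}\Pj^N$, and $\tau$ is by definition its image; consequently $\dim\tau\le (m+1)r-1$ with equality if and only if $ds_r$ is injective at $(T,[A])$.

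Next I would use the family $\{A_t\}$ to produce a nonzero tangent vector in the kernel of $ds_r$. The assignment $t\mapsto (T,[A_t])$ is a curve in $A\Sigma_r$ passing through $(T,[A])$ at $t=0$; since the first coordinate is constantly $T$, its image under $s_r$ is the constant curve at $T$, so the velocity vector $w=\frac{d}{dt}\big|_{t=0}(T,[A_t])$ satisfies $ds_r(w)=0$. It remains to check $w\neq 0$, i.e. that the curve is not constant to first order in the $(\Pj^m)^{(r)}$ factor. Because the family is genuinely nontrivial — the $A_t$ are pairwise distinct points of $(\Pj^m)^{(r)}$ for $t$ near $0$ — after passing to a suitable parametrization (reparametrize so that the family moves to first order, which is possible precisely because $A_t\neq A_0$ for $t\neq 0$ on a curve) we may assume $w\neq 0$ as a tangent vector to $(\Pj^m)^{(r)}$ at $[A]$, hence as a tangent vector to $A\Sigma_r$ at $(T,[A])$.

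Finally, combining the two steps: $ds_r$ at $(T,[A])$ has a nonzero kernel, so it is not injective, so its image $\tau$ has dimension strictly smaller than the source dimension $(m+1)r-1$, which by the Remark above is exactly the condition that the tangent spaces to $v_d(\Pj^m)$ at the $v_d(P_i)$ fail to be linearly independent. I expect the main obstacle to be the rigorous justification that $w\neq 0$: one must ensure that the family $\{A_t\}$, which is only assumed to be a positive-dimensional family of \emph{distinct} decompositions, can be reduced to an honest smooth curve germ in $A\Sigma_r$ along which the velocity in the symmetric-product direction is nonzero. This is handled by choosing a general curve inside the base of the family and, if necessary, normalizing and reparametrizing so that the induced map to $(\Pj^m)^{(r)}$ is an immersion at the relevant point; the points $v_d(P_i)$ being distinct keeps us in the smooth locus $U$ throughout, so all differentials are between honest tangent spaces and the dimension count is legitimate.
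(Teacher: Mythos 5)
Your proposal is correct and follows essentially the same route as the paper: the positive-dimensional family inside the fiber $s_r^{-1}(T)$ yields a nonzero tangent vector at $(T,[A])$ killed by $ds_r$ (the paper delegates this to Proposition 2.3 of \cite{sette}, while you make it explicit via a curve germ and its velocity, with the reparametrization/normalization caveat you already flag), whence the image $\tau$ of the differential cannot attain the maximal dimension $(m+1)r-1$.
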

\begin{proof} $A_t$ determines a positive dimensional subvariety $W$ in the fiber of $s_r$ over $T$ (see Proposition 2.3 of \cite{sette}). 
Thus, there exists a tangent vector to $A\Sigma_r$ at $(T, [A])$, where $[A]$ is the point of the symmetric 
product corresponding to $A$, which is killed by the differential of $s_r$ at $(T, [A])$. 
So $\tau$ can not have his maximal dimension and this conclude the proof.

\end{proof}

Unfortunately, the converse of the previous proposition does not hold in general.
Yet, the proposition implies that in order to exclude the existence of the family, it is sufficient to control that the dimension of the Terracini's space of $A$
attains the expected value. We can collect our results in the following.

\begin{theorem} Fix $ n \in \N $. Let $A$ be a non-redundant decomposition of a form $T$ in three variables of degree $d=2n+8$ with $ n \in N $. Assume that $\ell(A)\leq 3n+11$. If the dimension of the Terracini's space of $A$ equals the expected dimension $3r-1$,
then $A$ is minimal. Thus $T$ has rank $r=\ell(A)$ and it is identifiable.
\end{theorem}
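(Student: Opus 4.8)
The plan is to combine the dichotomy in Theorem \ref{proposition:TM} with the Terracini obstruction of Proposition \ref{nofam}. First I would note that the numerical hypotheses match exactly: here $d=2n+8$ and $r=\ell(A)\le 3n+11$, which is precisely the range $r\le 11+3n$ in degree $d=8+2n$ treated in Theorem \ref{proposition:TM}, and $A$ is assumed non-redundant, as required there. Applying Theorem \ref{proposition:TM} to $A$, one of two things happens: either $A$ is the unique decomposition of $T$, or there is a non-trivial positive-dimensional family $\{A_t\}$ of decompositions of $T$ with $A_0=A$.

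If $A$ is the unique decomposition of $T$, there is nothing more to prove: then $T$ admits no decomposition shorter than $A$, so $A$ is minimal, $T$ has rank $r=\ell(A)$, and $T$ is identifiable. So I would suppose, for contradiction, that the second alternative holds. The only point that requires care is that Proposition \ref{nofam} is phrased in terms of the fiber of the length-$r$ secant map $s_r$, so one needs each $A_t$ to have length exactly $r$. This is indeed the case for the family supplied by Theorem \ref{proposition:TM}: that family is produced, via Lemma \ref{lemma:2} or Lemma \ref{lemma:3}, by letting a length-$r'$ decomposition $A'_t$ of an auxiliary form $T_0$ lying on a line ($r'=5+n$) or on a conic ($r'=9+2n$) vary in a positive-dimensional family, and then adjoining the fixed remaining points $A\setminus A'$; each $A_t$ thus has length $r'+(r-r')=r=\ell(A)$. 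Granting this, Proposition \ref{nofam} applies with $m=2$ (three variables) and forces $\dim\tau<(m+1)r-1=3r-1$, contradicting the standing hypothesis $\dim\tau=3r-1$. Hence the second alternative is impossible, $A$ is the unique decomposition of $T$, and we conclude as in the first case.

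The step I expect to be the main obstacle is exactly this length bookkeeping: one has to make sure that the family of decompositions produced in the proof of Theorem \ref{proposition:TM} can be taken to consist of decompositions of length $\ell(A)$, so that it genuinely yields a tangent vector to $A\Sigma_r$ at $(T,[A])$ that is annihilated by $ds_r$, allowing Proposition \ref{nofam} to be invoked. The remaining ingredients — aligning the degree and length ranges with Theorem \ref{proposition:TM}, and deducing minimality, rank $r$, and identifiability from the uniqueness of $A$ — are routine.
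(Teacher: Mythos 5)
Your proof is correct and follows exactly the route the paper intends: the theorem is presented as a collection of the preceding results, namely the dichotomy of Theorem \ref{proposition:TM} combined with the Terracini obstruction of Proposition \ref{nofam}, which is precisely your argument. Your extra bookkeeping that the family produced via Lemma \ref{lemma:2} or Lemma \ref{lemma:3} consists of decompositions of length $r=\ell(A)$ (so that it lies in the fiber of $s_r$) is a sensible clarification of the paper's slightly loose phrasing, not a deviation.
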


In the following remark, we explain how the dimension of the Terracini's space can be computed, in practice. 
The claims below on the structure of tangent spaces to Veronese embedding
are standard, and can be found e.g. in \cite{ventitre} (see also \cite{sette}).

\begin{remark}
As we noticed in the introduction, a decomposition \begin{small} $A= \{ P_1, \dots ,P_r \}$ \end{small} of $T \in \Pj(Sym^d(\C^{m+1}))$ 
corresponds to the datum of $r$ linear forms $L_1,L_2, \dots, L_r$ in three variables. 

The tangent spaces to $v_d(\Pj^m)$ at $v_d(P_i)$ can be identified with the degree $d$ homogeneous piece of the ideal spanned by 
$ L^{d-1}_i m$, where $m$ is the ideal generated by the variables.

It follows that the Terracini's space can be identified with the degree $d$ homogeneous piece of the ideal spanned by 
$$ L^{d-1}_1 m, \dots ,L^{d-1}_1 m.$$ 

 Thus, in our case, we have that the Terracini's space is the ideal spanned by 
$L_1^{7+2n}m, \dots L_r^{7+2n}m$ with $ n \in \N $.

The computation of the dimension of the $d$-th piece of this ideal corresponds to the computation of the rank of the matrix of 
coefficients of the forms $L_i^{7+2n}x_j$, where the $x_j$'s are the coordinates of $\Pj^2$.
\end{remark}

We are now able to write an algorithm for detecting the identifiability of $T$.

\subsection{The algorithm}
\label{The algorithm}
Let $T$ be a form in three variable of degree $d=8+2n$ with $ n \in \N $.

Assume that we are given a decomposition $A$ of $T$, $A= \{P_1, P_2, \dots , P_r \} \subset \Pj^2.$ Assume that $\ell(A)\leq 3n+11$ and
that, if $T= \{ a_1v_d{P_1}+a_2v_d{P_2}+\dots +a_rv_d{P_r} \}$ none of the $ a_i $ are equal to zero.
Then, in order to prove that $A$ is minimal and so $T$ is identifiable, the following steps could be taken.
\begin{itemize}
\item[S0.] Compute the rank of the matrix $M_d$ of coordinates of the points $v_d(P_j)$'s.
\begin{itemize}
\item[S0.1] If the rank of $M_d$ is smaller than $r$, then $A$ is redundant, and the algorithm 
terminates and states that $T$ has rank $<r$. 
\item[S0.2] If the rank of $M_d$ is $r$, then $A$ is non-redundant, and the algorithm continues.
\end{itemize}
\item[S1.] If $r \leq 4 +n$, the algorithm terminates and states that $T$ is identifiable.
\item[S2.] Perform the Terracini's test as follows:
\begin{itemize}
\item[S2.1] Compute the linear forms $L_1, \dots ,L_r$ associated with $P_1,\dots , P_r$.
\item[S2.2] For $i=1,2, \dots r$ and $j=0,1,2$ compute the rank of the matrix of coefficients of the forms $x_j L_i^{7+2n} $, and call it $q$
\item[S2.3] If $q < 3r$ then the algorithm terminates claiming that it cannot prove the identifiability of $T$. 
\item[S2.4] If $q = 3r$ the algorithm terminates and states that $T$ is identifiable. 
\end{itemize}

\end{itemize}

Now we can show that using the method exposed below we can reduce considerably the computational cost.

\begin{remark} \label{compucost}
In order to find the dimension of the Terracini's space, the crucial step is
 to compute the rank of the matrix made by $[x_i (L_j)^{7+2n}]$ with $i=1,2,3$ and $j=1, \dots, r$. 
 So, we have to compute the rank of a $ \binom{9+2n}{2} \times 3r$ matrix. 
 Using the Gauss elimination method, we have that the computational costs of this process is in the order of 
 $$\frac{2}{3} \cdot \frac{(2r/3+27/22)^2}{2} \cdot 9r^2 \approx \frac{4}{3}r^4.$$

Notice indeed that to verify that $A$ is contained in a cubic curve, so to compute $h_A(3)$, we have to find the rank 
of the matrix $[v_3(P_1), \dots , v_3(P_r)]$. With the Gauss elimination method,
we have a computational cost in the order of $$ \frac{2}{3} 10^2 \cdot r. $$

So, the total computational cost is in the order of $$\frac{4}{3}r^4.$$

Then, comparing the two method, we have that Terracini's test can be much quicker then computing the Kruskal's ranks for high value of $r$.
\end{remark}

\section{Acknowledgments}
The author would like to thank Luca Chiantini for several fruitful discussions on the topics of the present research and his precious advices.

The author would like to thank also the anonymous referee for the useful observations about this work.

The author is a Ph.D. student at Universit\`a degli Studi di Siena, Dipartimento di Ingegneria dell'Informazione e Scienze Matematiche and he is supported by a Ph.D. grant for the Ph.D. program Information Engineering and Science.

\nocite{*} 
\bibliographystyle{ieeetr}
\bibliography{prova}

\end{document}